\newcommand{\calA}{\mathcal{A}}
\newcommand{\calM}{\mathcal{M}}
\newcommand{\calO}{\mathcal{O}}
 \newcommand{\R}{\mathbb{R}}
 \newcommand{\C}{\mathbb{C}}
 \newcommand{\Q}{\mathbb{Q}}
 \newcommand{\Z}{\mathbb{Z}}
 \newcommand{\inj}{\hookrightarrow}
 \newcommand{\surj}{\twoheadrightarrow}
\newcommand{\sep}{\: | \:}
\let\oldforall\forall
\renewcommand{\forall}{\; \oldforall}
\let\oldexists\exists
\renewcommand{\exists}{\; \oldexists}
\newcommand{\la}{\langle}
\newcommand{\ra}{\rangle}
\newcommand{\bs}{\backslash}
\DeclareMathOperator{\Stab}{Stab}
\DeclareMathOperator{\PO}{PO}
\DeclareMathOperator{\Ad}{Ad}
\DeclareMathOperator{\tr}{tr}
\theoremstyle{plain} \newtheorem*{theorem*}{Theorem}
\theoremstyle{plain} \newtheorem*{conjecture*}{Conjecture}
\theoremstyle{plain} \newtheorem*{lemma*}{Lemma}
\theoremstyle{plain} \newtheorem*{corollary*}{Corollary}
\theoremstyle{plain} \newtheorem*{proposition*}{Proposition}
\theoremstyle{plain} \newtheorem{thm}{Theorem}[section]
\theoremstyle{plain} \newtheorem{theorem}[thm]{Theorem}
\theoremstyle{plain} \newtheorem{lemma}[thm]{Lemma}
\theoremstyle{plain} \newtheorem{corollary}[thm]{Corollary}
\theoremstyle{plain} \newtheorem{proposition}[thm]{Proposition}
\theoremstyle{definition} \newtheorem{definition}[thm]{Definition}
\theoremstyle{definition} 
\theoremstyle{plain} 
\theoremstyle{plain} 
\theoremstyle{plain} 
\theoremstyle{plain} 
\theoremstyle{plain} 
\theoremstyle{definition} 
\theoremstyle{definition} 
\theoremstyle{plain} 
\theoremstyle{plain} \newtheorem{thm2}{Yolo}[section]
\theoremstyle{plain} 
\theoremstyle{plain} \newtheorem*{claim*}{Claim}
\theoremstyle{definition} \newtheorem{remark}[thm]{Remark}
\theoremstyle{definition} \newtheorem{example}[thm2]{Example}
\theoremstyle{plain}
\theoremstyle{plain} \newtheorem*{theoreme*}{Theorème}
\theoremstyle{plain} 
\theoremstyle{plain} 
\theoremstyle{plain} \newtheorem*{lemme*}{Lemme}
\theoremstyle{plain} 
\theoremstyle{plain} \newtheorem*{corollaire*}{corollaire}
\theoremstyle{definition} 
\theoremstyle{plain} 
\theoremstyle{definition} 
\theoremstyle{definition}
\newcommand{\Hy}{\mathbf{H}}
\newcommand{\tM}{\widetilde M}
\newcommand{\G}{\mathbf{G}}
\newcommand{\aPO}{\mathbf{PO}}
\newcommand{\aO}{\mathbf{O}}
\newcommand{\aGL}{\mathbf{GL}}
\newcommand{\bd}{\partial}
\newcommand{\<}{\langle}
\renewcommand{\>}{\rangle}
\renewcommand{\:}{\colon}
\renewcommand{\sep}{\mid}
\newcommand{\Mgl}{\calM^{\mathrm{gl}}}
\newcommand{\Agl}{\calA^{\mathrm{gl}}}
\DeclareMathOperator{\id}{id}
\title{The trace field of hyperbolic gluings}
\author{Olivier Mila}
\address{Centre de recherches mathématiques,
	Université de Montréal,\linebreak
  Pavillon André-Aisenstadt,
	Montréal, Québec, H3T~1J4, Canada}
  \email{olivier.mila@umontreal.ca, \textit{\fontfamily{\familydefault}\selectfont Web: }crm.umontreal.ca/\textasciitilde mila}
\begin{document}
\begin{abstract}
We determine the adjoint trace field of gluings of general hyperbolic manifolds.
This provides a new method to prove the nonarithmeticity of gluings, which can be applied 
to the classical construction of Gromov and Piatetski-Shapiro (and generalizations) as well as certain 
gluings of pieces of commensurable arithmetic manifolds.
As an application we give many new examples of nonarithmetic gluings and prove 
that the unique nonarithmetic Coxeter 5-simplex is not commensurable to 
any gluing of arithmetic pieces.
\end{abstract}
\thanks{This work is supported by the Swiss National Science Foundation, Project numbers \texttt{PP00P2\_157583} and 
\texttt{P2BEP2\_188144}}
\vspace*{-.2cm}
\maketitle
\section{Introduction} \label{sec:intro}

Introduced in 1987 by Gromov and Piatetski-Shapiro \cite{GPS}, the gluing construction 
remains as of today the only 
known method to produce nonarithmetic hyperbolic manifolds in arbitrary dimensions.
More recently, Raimbault \cite{Raimbault} and Gelander and Levit \cite{GL} generalized 
it to show the existence of ``many'' non-commensurable manifolds of bounded volume.
Also, the construction of Belolipetsky and Thomson \cite{BT} (generalizing ideas of Agol \cite{Agol}) 
of manifolds with arbitrarily short systole uses doubling --- a particular type of gluing.

In the study of general hyperbolic manifolds such as those, a central role is played by 
\emph{algebraic} invariants --- the most famous being
the \emph{invariant trace field} and \emph{invariant quaternion algebra} in dimension 2 and 3 
(see \cite{MR}).
In his paper \cite{Vinberg}, Vinberg introduces similar invariants for Zariski-dense 
subgroups of linear groups: the \emph{adjoint trace field} and the \emph{ambient group}
(see Section~\ref{sec:background} for definitions).
By Borel's Density Theorem, any lattice in $\PO(n,1)^\circ$ is Zariski-dense, and thus the adjoint trace field 
can be seen as a generalization of the invariant trace field to hyperbolic manifolds of higher dimensions.
For an arithmetic manifold, the adjoint trace field and the ambient group coincide with the field and 
algebraic group used in its definition (see \cite[Prop.~2.5]{PR}), and for Coxeter groups, Vinberg himself 
computes these invariants in Section~4 of his paper \cite{Vinberg}. 

This article is concerned with the computation of the adjoint trace field (hereafter trace field) and the ambient group for manifolds
arising from gluings.
After defining gluings in their general form (Definition~\ref{def:gluings}) we show that the 
\emph{gluing isometry} (the isometry used to identify the boundary hypersurfaces) naturally admits a \emph{field of definition} 
(Lemma~\ref{lem:field-of-def}).
We then prove (Theorem~\ref{th:tf-gluing-field-of-def-gluing-isom}) that this field of definition coincides with the trace field of the gluing.

In Section~\ref{sec:classical-gluings} we use Theorem~\ref{th:tf-gluing-field-of-def-gluing-isom} to compute the trace field of 
the ``classical'' gluings of 
Gromov--Piatetski-Shapiro and their generalizations \cite{GPS,Raimbault,GL} (assuming \emph{canonical} gluing isometries, see 
Section~\ref{sec:classical-gluings}).
Note that these gluings all use pieces of non-commensurable arithmetic manifolds (recall that a \emph{piece} of a hyperbolic manifold 
$M$ is the completion of $M -  N_1  \cup \cdots \cup N_r$, where the $N_i$ are disjoint finite-volume hypersurfaces in $M$).
Also, we prove that doubling does not increase the trace field (Proposition~\ref{prop:double-same-trace-field}) and thus the manifolds 
of Agol--Belolipetsky--Thomson \cite{Agol,BT} all have the same trace field as the original arithmetic manifolds (this was already 
proven in \cite{Thomson}).

An important application of our work is that it provides a new method to prove nonarithmeticity of certain gluings.
It leads to particularly interesting and surprising results when applied to gluings of pieces of \emph{commensurable} manifolds, 
as the next theorem shows:
\begin{theorem} \label{th:intro-commensurable-large-trace-field}
  Let $k = \Q$ or $\Q(\sqrt{2})$.
  For each even $n=2m \geq 4$, there exists an $n$-manifold $M$ with trace field 
  $K$ of arbitrary large degree $d = [K:k]$ obtained as 
  a gluing of pieces of \emph{pairwise commensurable} arithmetic manifolds with trace field $k$.
  In particular, if $d > 1$, $M$ is nonarithmetic.
\end{theorem}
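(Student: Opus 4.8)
The plan is to realise $M$ by a Gromov--Piatetski-Shapiro-style ``interbreeding'' in which --- contrary to the classical construction --- \emph{all} the pieces are cut from manifolds lying in a single commensurability class, the arithmetic of the similarity group of the common hypersurface being used to inflate the field of definition of the gluing isometry. Fix a quadratic form $q'$ over $k$ of signature $(n-1,1)$; since $n-1\ge 3$ it defines a commensurability class of arithmetic hyperbolic $(n-1)$-manifolds of trace field $k$, and for every totally positive $a\in k^{*}$ the form $q_{a}:=q'\perp\langle a\rangle$ (signature $(n,1)$) defines an arithmetic $n$-manifold $W_{a}$ with trace field $k$ containing a totally geodesic hypersurface modelled on $q'$. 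Using $q_{a}\cong(a/b)\bigl(q'\perp\langle b\rangle\bigr)$ whenever $(a/b)q'\cong q'$, one sees that $W_{a}$ and $W_{b}$ are commensurable as soon as $a/b$ lies in the similarity group $G(q')=\{\lambda\in k^{*}:\lambda q'\cong q'\}$. On the other hand, gluing a piece of $W_{a}$ to a piece of $W_{b}$ along hypersurfaces modelled on $q'$ requires conjugating the ambient group of one of the two pieces by a transformation that rescales by $\sqrt{a/b}$ the line orthogonal to the common hyperplane; hence, by Lemma~\ref{lem:field-of-def}, the field of definition of the resulting gluing isometry is $k(\sqrt{a/b})$, and for a cyclic gluing $P_{0},\dots,P_{N}$ cut from $W_{a_{0}},\dots,W_{a_{N}}$ it is the compositum of the fields $k(\sqrt{a_{i}/a_{i+1}})$.

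Here is where the hypothesis that $n=2m$ is \emph{even} enters. The rank of $q'$ is then even, so the discriminant of $\lambda q'$ equals that of $q'$ for every $\lambda\in k^{*}$; consequently membership $\lambda\in G(q')$ is controlled only by the positivity of $\lambda$ at the real places of $k$ and by a Hilbert-symbol condition $(\lambda,\delta)_{v}=1$ at the finitely many finite places $v$ at which a fixed $\delta=\delta(q')\in k^{*}$ is a non-square. A Chebotarev (or Dirichlet) argument then yields infinitely many totally positive prime elements $\ell_{1},\ell_{2},\dots$ of $k$ meeting all of these conditions and multiplicatively independent modulo $(k^{*})^{2}$; thus $\ell_{i}\in G(q')$ for all $i$, while $[\,k(\sqrt{\ell_{1}},\dots,\sqrt{\ell_{N}}):k\,]=2^{N}$. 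Given a target $d$, choose $N$ with $2^{N}\ge d$, put $a_{0}=1$ and $a_{i}=a_{i-1}/\ell_{i}$ for $1\le i\le N$, so that each ratio $a_{i}/a_{j}$ is a product of the $\ell$'s, hence lies in the group $G(q')$; therefore the manifolds $W_{a_{0}},\dots,W_{a_{N}}$ are pairwise commensurable and all have trace field $k$.

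After passing to suitable finite covers one arranges --- exactly as in \cite{GPS} --- that the chosen hypersurface of each $W_{a_{i}}$ is embedded, two-sided and non-separating, and that all these hypersurfaces are isometric to one fixed finite-volume hyperbolic $(n-1)$-manifold $\bar H$; cutting each $W_{a_{i}}$ along its copy of $\bar H$ produces a connected piece $P_{i}$ with two boundary copies of $\bar H$, and gluing $P_{0},\dots,P_{N}$ cyclically along $\bar H$ (a gluing in the sense of Definition~\ref{def:gluings}) yields a connected finite-volume $n$-manifold $M$, a gluing of pieces of pairwise commensurable arithmetic manifolds of trace field $k$. By Theorem~\ref{th:tf-gluing-field-of-def-gluing-isom} the trace field $K$ of $M$ is the field of definition of this gluing isometry, which by the computation above is the compositum of the $k(\sqrt{a_{i}/a_{i+1}})$, that is, $K=k(\sqrt{\ell_{1}},\dots,\sqrt{\ell_{N}})$, a field of degree $2^{N}\ge d$ over $k$. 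Finally, if $d>1$ then $K\supsetneq k$, and $M$ cannot be arithmetic: a gluing seam of $M$ is a totally geodesic hypersurface isometric to $\bar H$, and its trace field is contained in $k$, whereas for an arithmetic manifold every totally geodesic hypersurface is arithmetic with the same trace field as the manifold (see \cite[Prop.~2.5]{PR}), namely $K\ne k$.

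The delicate point is the interplay between ``pairwise commensurable pieces'' and ``large trace field'': one must check that the similarity group $G(q')$ genuinely contains arbitrarily many multiplicatively independent totally positive elements --- this is precisely where evenness of $n$ is needed, through the vanishing of the discriminant obstruction --- and that the field of definition of the cyclic gluing is the \emph{full} compositum $k(\sqrt{\ell_{1}},\dots,\sqrt{\ell_{N}})$ rather than a proper subfield, which one reads off from the explicit form of the gluing isometry in Lemma~\ref{lem:field-of-def}. The remaining ingredients --- torsion-freeness via Selberg's lemma, making the hypersurfaces embedded with matching boundary patterns so that the cyclic gluing is a genuine connected manifold, and carrying out the similarity-group computations concretely for $k=\Q$ and $k=\Q(\sqrt{2})$ --- are routine covering-space and quadratic-form bookkeeping of the kind already present in \cite{GPS}.
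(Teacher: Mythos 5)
Your proposal is correct and follows essentially the same route as the paper: glue pieces of arithmetic manifolds built from forms $q'\perp\langle a_i\rangle$, use the evenness of $n$ to kill the discriminant obstruction so that a Dirichlet/Chebotarev argument supplies multiplicatively independent totally positive primes lying in the similarity group of $q'$, and read off the trace field as the compositum of the $k(\sqrt{a_ia_j})$ via Theorem~\ref{th:tf-gluing-field-of-def-gluing-isom}. The paper (Theorem~\ref{th:realization-trace-fields} together with Examples~\ref{ex:gluings-Q-0-mod-4} and~\ref{ex:gluings-Qsqrt2}) simply carries out your abstract similarity-group computation explicitly, with fixed diagonal forms over $\Q$ and $\Q(\sqrt{2})$ and a case split on $n\bmod 4$.
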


In fact one can also obtain nonarithmetic gluings using only one piece.
If $M$ is a hyperbolic manifold containing a non-separating hypersurface $N \subset M$, we call a \emph{twist} of $M$ any manifold 
obtained by cutting $M$ along $N$ and gluing it back together using a different isometry $N \to N$ (see Section~\ref{sec:examples}).
We show:
\begin{theorem} \label{th:intro-twist-nonarithmetic}
  Let $K = k(\sqrt{a})$ be a totally real quadratic extension of the number field $k$.
  Assume either that $k \neq \Q$ and $a \notin \Q$, or that $k=\Q$ and $a \in \Z$ is odd.
  Then for each even $n = 2m \geq 4$ there exists an $n$-manifold $M$ with trace field $K$ obtained by twisting an 
  arithmetic manifold with trace field $k$.
  In particular, $M$ is nonarithmetic.
\end{theorem}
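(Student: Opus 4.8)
The plan is to realise $M$ as a \emph{twist} of a carefully chosen arithmetic manifold $M_0$ defined over $k$, and then to read off its trace field from Theorem~\ref{th:tf-gluing-field-of-def-gluing-isom}. A twist cuts $M_0$ along a single non-separating totally geodesic hypersurface $N$ and reglues the piece $P=\overline{M_0\setminus N}$ to itself by an isometry of $N$; in particular it is a gluing in the sense of Definition~\ref{def:gluings}, so Lemma~\ref{lem:field-of-def} attaches to the new regluing map a field of definition, and Theorem~\ref{th:tf-gluing-field-of-def-gluing-isom} identifies it with the trace field of $M$. Thus the entire problem reduces to producing $M_0$, the hypersurface $N$, and the new gluing isometry so that this field of definition is exactly $K$.

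\emph{Construction of $M_0$ and $N$.} I would take $M_0=\Hy^n/\Gamma$ with $\Gamma$ a torsion-free, orientation-preserving congruence subgroup of $\mathrm{O}(q;\calO_k)$, where $q$ is a quadratic form of signature $(n,1)$ over $k$ whose shape is adapted to the extension $K=k(\sqrt a)$ and whose local invariants are chosen so that $\mathrm{O}(q;\calO_k)$ is a lattice in $\mathrm{Isom}(\Hy^n)\cong\PO(n,1)$ (definite at the non-distinguished archimedean places when $k\neq\Q$); then $M_0$ is arithmetic with trace field $k$. Inside $q$ I would single out a codimension-one $k$-rational subspace which, after passing to a suitable congruence cover, gives a connected two-sided totally geodesic hypersurface $N\subset M_0$ that is \emph{non-separating}, i.e.\ $[N]\neq 0$ in $H_{n-1}(M_0;\Z)$; the existence of such an $N$ is provided by a non-vanishing argument of Millson type, and this is one point where it is convenient that $n$ is even. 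The manifold $M_0$ is recovered from $P$ by regluing along the canonical identification of the two copies of $N$ in $\partial P$, whose field of definition is $k$.

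\emph{The twist.} The heart of the proof is to exhibit an isometry of $N$ which, used as the new regluing map, produces a gluing isometry whose field of definition is \emph{exactly} $K$: the mechanism is that the square root $\sqrt a$, absent from $k$ but present in $K$, forces the relevant identification of the two hypersurfaces to be definable over $K$ but not over $k$, and one then has to pin down the level and the chosen isometry so that the field generated over $k$ is neither smaller nor larger than $K$. This is precisely where the two hypotheses enter: ``$k\neq\Q$ and $a\notin\Q$'' and ``$k=\Q$ and $a\in\Z$ odd'' describe exactly the situations in which the necessary archimedean and, above all, $2$-adic congruence conditions on $q$, $a$ and the level can all be met simultaneously.

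\emph{Conclusion.} Twisting $M_0$ along $N$ by this isometry yields an $n$-manifold $M$, and by Lemma~\ref{lem:field-of-def} together with Theorem~\ref{th:tf-gluing-field-of-def-gluing-isom} its trace field equals the field of definition of the regluing isometry, namely $K$. For nonarithmeticity I would argue as for the last assertion of Theorem~\ref{th:intro-commensurable-large-trace-field}: a hyperbolic manifold obtained by gluing pieces of an arithmetic manifold of trace field $k$ which is itself arithmetic must have trace field $k$ — in an arithmetic manifold of even dimension a totally geodesic hypersurface yields a reflection in the commensurator, which forces commensurability with $M_0$, and the adjoint trace field is a commensurability invariant — so $[K:k]=2$ rules this out. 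The step I expect to be the real obstacle is the twist: reconciling ``field of definition exactly $K$'' with ``$N$ non-separating'' and ``$\Gamma$ arithmetic of trace field $k$'' amounts to a system of archimedean and non-archimedean local conditions, and it is the consistency of this system that forces the stated hypotheses on $(k,a)$.
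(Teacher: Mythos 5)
Your overall framework is the right one and matches the paper's: a twist is a closing up in the sense of Definition~\ref{def:gluings}, so by Theorem~\ref{th:tf-gluing-field-of-def-gluing-isom} the trace field of $M$ is the field of definition of the regluing isometry, and the whole problem reduces to manufacturing an isometry of (a finite cover of) $N$ whose field of definition is exactly $K$. But that manufacturing step --- which you yourself flag as ``the real obstacle'' --- is precisely the content of the theorem, and your proposal leaves it as a black box: you assert that ``the square root $\sqrt a$ \ldots forces the relevant identification to be definable over $K$ but not over $k$'' without saying where such an isometry comes from, and you guess that the hypotheses on $(k,a)$ encode ``archimedean and $2$-adic congruence conditions on the level,'' which is not where they actually enter. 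As written, there is no construction to check, so this is a genuine gap rather than an alternative route.

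The paper's resolution (Lemma~\ref{lem:closing-up-increase-trace-field} together with Examples~\ref{ex:1} and \ref{ex:2}) is a concrete similitude trick: choose an admissible form $f_0$ over $k$ with $f_0\cong a f_0$ and an explicit matrix $A_0\in\aGL_n(k)$ satisfying $f_0\circ A_0=af_0$ and $\tfrac1a A_0^2\in\aO_{f_0}(\calO_k)$. Then $g=\tfrac{1}{\sqrt a}A_0$ is an isometry of $\Hy_{f_0}$ lying in $\aPO_{f_0}(K)\setminus\aPO_{f_0}(k)$ (its Galois conjugate is $g\rho$), and the integrality of $\tfrac1a A_0^2$ makes $g$ normalize a finite-index subgroup of the arithmetic lattice, so $g$ descends to an isometry of a finite cover of $N$; extending by the identity in the last coordinate gives the twisting isometry of the boundary of the piece of the manifold attached to $f=f_0+x_n^2$, with field of definition exactly $K=k(\sqrt a)$. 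The hypotheses on $(k,a)$ are exactly what make the explicit $2\times 2$ building blocks exist: for $k=\Q$ and $a=d$ odd one writes $d=2b+1$ and uses $q_1=\langle -1,1\rangle$, $q_2=\langle d,1\rangle$ with explicit integral similitude matrices of multiplier $d$; for $k\neq\Q$ and $a=b\notin\Q$ one rescales $b$ by a square so that $b-1$ has exactly one negative conjugate, which is what keeps $f_0=\langle b-1,1\rangle\perp\langle b,1\rangle\perp\cdots$ admissible. Your Millson-type appeal for a non-separating $N$ and your commensurator argument for nonarithmeticity are serviceable, but without the similitude construction (or an equivalent source of isometries of $N$ defined over $K$ and not over $k$) the proof does not go through.
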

It is clear from both theorems that the \emph{gluing isometry} plays an essential role for nonarithmeticity; 
this contrasts with the methods used in \cite{GPS,Raimbault,GL}
(see Section~\ref{sec:classical-gluings}).
As these gluings use pieces of commensurable manifolds and have increased trace field, they are new examples of nonarithmetic 
manifolds, both in the compact and non-compact setting.
Observe also that the twists from Theorem~\ref{th:intro-twist-nonarithmetic} have the same volume as the original arithmetic 
manifolds.

More precise versions of these results are given in Theorem~\ref{th:realization-trace-fields}, 
Examples~\ref{ex:gluings-Q-0-mod-4} and \ref{ex:gluings-Qsqrt2} for Theorem~\ref{th:intro-commensurable-large-trace-field} and Examples~\ref{ex:1} and \ref{ex:2} for Theorem~\ref{th:intro-twist-nonarithmetic}.
Observe that the restrictions imposed on the fields $k$ and $K$ are only technical assumptions used to simplify the proofs, and 
could probably be removed with more careful computations.
However, the fact that these manifolds have \emph{even} dimension is necessary: in Theorem~\ref{th:arith-gluing-odd-dim} we prove 
that gluing \emph{odd dimensional} pieces of commensurable arithmetic manifolds never increases the trace field.

An application of this even-odd dimensional dichotomy is the following.
Let $\Delta_5$ denote the Coxeter $5$-simplex with the following diagram:
\[
\pgfmathsetmacro{\minsize}{0.15cm}
\begin{tikzpicture}[scale=0.6]
\tikzstyle{every node}=[draw,shape=circle,minimum size=\minsize,inner sep=0, fill=black]
\node at (0,1) (1) {};
\node at (0.8,0) (2) {};
\node at (2.2,0) (3) {};
\node at (3,1) (4) {};
\node at (2.2,2) (5) {};
\node at (0.8,2) (6) {};
\draw (1)--(2) ;
\draw (2)--(3) node[midway,above=1pt, opacity=0, text opacity=1]  {4};
\draw (3)--(4) ;
\draw (4)--(5) ;
\draw (5)--(6) ;
\draw (6)--(1) ;
\end{tikzpicture}
\]
It was shown by Fisher et.\ al.\ \cite{FLMS} that the orbifold $\Delta_5 \bs \Hy^5$ is not commensurable to a gluing of pieces of
\emph{non-commensurable} arithmetic manifolds.
Using Theorem~\ref{th:arith-gluing-odd-dim} we improve this result by showing:
\begin{theorem}\label{th:intro-Delta-5}
  The orbifold $\Delta_5 \bs \Hy^5$ is not commensurable to any gluing of pieces of arithmetic manifolds, commensurable or not.
\end{theorem}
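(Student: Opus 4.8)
The plan is to argue by contradiction and reduce, via the odd-dimensional rigidity of Theorem~\ref{th:arith-gluing-odd-dim}, to the case already settled by Fisher et al.\ in \cite{FLMS}. Assume $\Delta_5 \bs \Hy^5$ is commensurable to a gluing $M = P_1 \cup \cdots \cup P_r$ in which each $P_i$ is a piece of an arithmetic $5$-manifold $A_i$. Since the trace field and arithmeticity are commensurability invariants, I would pass to a finite manifold cover and work with $M$ itself; and I would use that $\Delta_5 \bs \Hy^5$, the unique nonarithmetic hyperbolic Coxeter $5$-simplex, is nonarithmetic, hence so is $M$.

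Next I would look at the dual graph of the gluing (one vertex per piece $P_i$, one edge per gluing hypersurface; it is connected), and color each vertex by the commensurability class of its source $A_i$. The key step is that any edge joining two pieces with commensurable sources can be contracted: if $A_i$ and $A_j$ are commensurable, then because $\dim M = 5$ is odd, the argument behind Theorem~\ref{th:arith-gluing-odd-dim}, applied to the partial gluing $P_i \cup P_j$, should show that $P_i \cup P_j$ with its remaining boundary hypersurfaces is again a piece of an arithmetic manifold commensurable to $A_i$ (trace field, ambient group and integrality all unchanged). Contracting such edges one at a time strictly decreases the number of vertices, so after finitely many steps I obtain a presentation of $M$ as a gluing of pieces $Q_1, \ldots, Q_s$ of arithmetic manifolds $B_1, \ldots, B_s$ in which no gluing hypersurface joins pieces with commensurable sources. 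If $s = 1$, then $M$ is itself a piece of an arithmetic manifold, i.e.\ $M$ is arithmetic, contradicting nonarithmeticity; this also covers twists of a single arithmetic manifold. If $s \geq 2$, then $M$ is a gluing of pieces of non-commensurable arithmetic manifolds, so \cite{FLMS} gives the contradiction.

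The hard part will be the contraction step: checking that gluing two pieces coming from commensurable arithmetic manifolds along part of their boundary produces not just a hyperbolic manifold with the expected trace field, but genuinely a piece of an arithmetic manifold in the same commensurability class. This is precisely where oddness of the dimension is used — it is the vanishing of the obstruction to extending the arithmetic structure across the gluing locus, which underlies Theorem~\ref{th:arith-gluing-odd-dim} — but it has to be done keeping track of the boundary data. I would also need to confirm that the reduced configuration $Q_1 \cup \cdots \cup Q_s$ meets the hypotheses of \cite{FLMS}: if that theorem asks for \emph{pairwise} non-commensurable sources rather than merely non-commensurability across each gluing hypersurface, I would either contract more aggressively or rerun its argument in this mild generalization, neither of which should be a serious difficulty.
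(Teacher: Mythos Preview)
Your contraction step contains a genuine gap. You claim that in odd dimensions, gluing two pieces $P_i$, $P_j$ of commensurable arithmetic manifolds along a common boundary hypersurface produces again a \emph{piece of an arithmetic manifold} in the same commensurability class. Theorem~\ref{th:arith-gluing-odd-dim} does not say this: it only says the \emph{trace field} is unchanged. Having the correct trace field and ambient group (i.e.\ being quasi-arithmetic) is strictly weaker than being a piece of an arithmetic manifold. In fact your claim is false already in dimension~$5$: the doubles of Agol--Belolipetsky--Thomson are obtained by interbreeding two copies of the \emph{same} arithmetic piece (so the sources are trivially commensurable), they are closed, they have the trace field of the original arithmetic manifold by Proposition~\ref{prop:double-same-trace-field}, and yet they are not arithmetic. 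If your contraction were valid, such a double would be a closed piece of an arithmetic manifold, hence arithmetic --- a contradiction. The phrase ``vanishing of the obstruction to extending the arithmetic structure across the gluing locus'' misreads the content of Theorem~\ref{th:arith-gluing-odd-dim}: what vanishes is the obstruction at the level of the field of definition of the gluing isometry, not at the level of integrality.

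The paper's proof avoids this entirely by never trying to reorganize the gluing. It uses instead the \emph{specific} trace field of $\Delta_5\bs\Hy^5$, namely $\Q(\sqrt{2})$, with ambient group $\aPO_f$ for $f=-x_0^2+x_1^2+\cdots+x_5^2$. If all the arithmetic pieces are pairwise commensurable, their common trace field is a subfield of $\Q(\sqrt{2})$ by Lemma~\ref{lem:trace-field-inclusion}; it cannot be $\Q(\sqrt{2})$ since $\aPO_{f,\Q(\sqrt{2})}$ is not admissible, so it is $\Q$. Theorem~\ref{th:arith-gluing-odd-dim} then forces the trace field of the whole gluing to be $\Q$, contradicting $\Q(\sqrt{2})$. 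The case where at least two pieces come from non-commensurable arithmetic manifolds is exactly what \cite{FLMS} already excludes, so the dichotomy is clean and no graph contraction is needed.
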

It is not known if such objects exist in higher dimensions.
In particular, Questions~0.4 of \cite{GPS} are still open.

The paper is organized as follows.
In Section~\ref{sec:background} we recall the necessary background, we define gluings, gluing isometries and 
their fields of definition.
Section~\ref{sec:main-theorem} is devoted to the proof of the main theorem (Theorem~\ref{th:tf-gluing-field-of-def-gluing-isom}).
In Section~\ref{sec:classical-gluings}, we compute the trace fields of the classical gluing constructions of \cite{GPS,Raimbault,GL} 
and prove Theorem~\ref{th:intro-commensurable-large-trace-field}.
In Section~\ref{sec:corollaries} we expose various corollaries such as:
\begin{itemize}
  \item Proposition~\ref{prop:gluing-containing-noncommensurable-nonarith}: 
    A gluing containing (among others) two pieces of non-commensurable arithmetic 
    manifolds is nonarithmetic.
  \item Theorem~\ref{th:gluing-quadratic}: Each gluing operation increases the trace field by at most a quadratic extension.
  \item Proposition~\ref{prop:double-same-trace-field}: Doubling does not change the trace field.
\end{itemize}
Finally, in Section~\ref{sec:examples} we prove Theorems~\ref{th:intro-twist-nonarithmetic} and \ref{th:intro-Delta-5}.

\emph{Acknowledgements:}
Most of the results in this article are contained in the author's PhD thesis \cite{Mila-PhD}.
The author thanks Vincent Emery for many helpful suggestions and discussions, and Jean Raimbault and Matthew Stover for 
their comments.

\section{Gluings and fields of definition} \label{sec:background}
In this paper all manifolds are hyperbolic and complete of finite volume.
Furthermore, manifolds with boundary are assumed to have totally geodesic boundary components that are themselves of finite volume.
Let $M$ be a manifold with (possibly empty) boundary.
Then its universal cover $\tM$ is either $\Hy^n$ or an intersection of half spaces in $\Hy^n$.
Thus $M$ can be written as $\Gamma \bs \tM$, where $\Gamma \subset \PO(n,1)$ is a discrete torsion-free subgroup stabilizing $\tM$.
If $M$ has boundary then $\Gamma$ is not a lattice, but it is still \emph{almost Zariski-dense} in $\PO(n,1)$, by which we mean that 
$\Gamma \cap \PO(n,1)^\circ$ is Zariski-dense in $\PO(n,1)^\circ$ (see \cite[Cor.~1.7b]{GPS}).
If $N \subset \bd M$ is a hypersurface, then its pre-image in $\tM$ consists of a disjoint union of 
hyperplanes in $\bd \tM$; we call those \emph{hyperplane lifts} of $N$.


The \emph{(adjoint) trace field} of $M = \Gamma \bs \tM$ is the field 
$\Q(\{\tr\Ad \gamma \sep \gamma \in \Gamma)\})$, where 
$\Ad$ denotes adjoint representation.
It is invariant under commensurability \cite[Th.~3]{Vinberg}.
By \cite[Th.~1]{Vinberg} this is also the smallest field $k$ such that there exists an algebraic $k$-group $\G$ and an isomorphism $\G(\R) \cong \PO(n,1)^\circ$ such that 
$\Gamma \cap \PO(n,1)^\circ \subset \G(k)$ via this isomorphism.
By Zariski-density, $\G$ is unique up to $k$-isomorphism; we call it the \emph{ambient group} of $\Gamma\cap \PO(n,1)^\circ$.
Since $\Gamma$ is contained in a lattice, its trace field is always a number field (see \cite[Chap.~1, §6]{Vinberg-Lie-Groups}).

Assume that $M$ contains (in its interior or in its boundary) an immersed hypersurface $N$.
Then the Zariski-closure of $\Stab_{\Gamma\cap \PO(n,1)^\circ} N$ is a $k$-form of $\PO(n-1,1)^\circ$ in $\G$.
Thus the ambient group of $\Gamma \cap \PO(n,1)^\circ$ must be of the form $\aPO_f^\circ$, where $f$ is a quadratic form over $k$ (see \cite{LiMillson}).
Hence the above isomorphism extends to $\aPO_f(\R) \cong \PO(n,1)$, such that the image of $\Gamma$ lies in $\aPO_f(k)$.
We will (by abuse of terminology) call $\aPO_f$ the ambient group of $\Gamma$.

For a quadratic form $f$ of signature $(n,1)$, let $\Hy_f$ denote the $f$-hyper\-boloid model for hyperbolic space:
$\Hy_f = \{x \in \R^{n+1} \sep f(x) = -1\} / \{\pm 1\}$.
Its group of isometries is $\aPO_f(\R)$.
Then one can see the universal cover $\tM$ in $\Hy_f$ in such a way that $M = \Gamma \bs \tM$, with $\Gamma \subset \aPO_f(k)$.
We will call a \emph{model} for $M$ any choice of universal cover $\tM \subset \Hy_f$ and monodromy representation $\Gamma \subset \aPO_f(k)$ such that $M = \Gamma \bs \tM$.

We are now ready to define gluings. 
\begin{definition} \label{def:gluings}
  \begin{enumerate}
    \item Let $M_1, M_2$ be manifolds with boundary having isometric boundary components $N_1 \subset \bd M_1$ and $N_2 \subset \bd M_2$.
      Let $\varphi\:N_1 \to N_2$ be an isometry.
      Then the adjunction space \linebreak $M_1 \cup_\varphi M_2 = M_1 \sqcup M_2 / \sim_\varphi$, where $\sim_\varphi$ is the relation generated by $x \sim_\varphi \varphi(x)$ 
      for $x \in N_1$
      is called the \emph{interbreeding} of $M_1$ and $M_2$ along $\varphi$.
    \item Let $M_1$ be a manifold with boundary, and let $N_1, N_2 \subset \bd M_1$ be isometric boundary components.
      Let $\varphi\:N_1 \to N_2$ be an isometry.
      Then the quotient space $M_1 / \sim_\varphi$, with $\sim_\varphi$ the equivalence relation generated by $x \sim_\varphi \varphi(x)$ for $x \in N_1$
      is called the \emph{closing up} of $M_1$ along $\varphi$.
    \item If $\calM$ is a set of manifolds with boundary, we let $\Mgl$ denote the smallest set 
      containing $\calM$ and closed under interbreedings and closing ups.
      Elements of $\Mgl$ are called \emph{gluings} of $\calM$.
  \end{enumerate}
\end{definition}
In both (1) and (2), we call $\varphi$ the \emph{gluing isometry}.
It follows from \cite[§2.10.B]{GPS} that gluings have a complete structure of hyperbolic manifold with boundary.

Let $M$ be a manifold. 
A \emph{piece} of $M$ is the manifold with boundary obtained by taking the completion of 
$M - N_1 \cup \cdots \cup N_r$, where the $N_i$ are disjoint hypersurfaces.
If $\calA$ denotes the set of all pieces of arithmetic manifolds (or \emph{arithmetic pieces} for short), then the manifolds constructed 
by Gromov and Piatetski-Shapiro \cite{GPS} as well as their generalizations mentioned in the introduction are all 
elements of $\Agl$.

We now define extensions of gluing isometries.
Let $M_1, M_2$ be manifolds with boundary, and let $N_1 \subset \bd M_1$ and $N_2 \subset \bd M_2$ be 
isometric boundary components.
Here we allow the case $M_1 = M_2$ in order to treat both interbreeding and closing up at once.
For $i=1,2$, let $k_i$ be the trace field of $M_i$.
Choose models $\tM_i \subset \Hy_{f_i}$ for $M_i$ with monodromy representations 
$\Gamma_i \subset \aPO_{f_i}(k_i)$ and hyperplane lifts $R_i \subset \partial \tM_i$ of $N_i \subset M_i$.
Observe that both cases $M_1 = M_2, \tM_1 \neq \tM_2$ and $M_2 = $ other copy of $M_1, \tM_1 = \tM_2$ are allowed.

An \emph{extension} of the gluing isometry $\varphi\: N_1 \to N_2$ is an isometry 
$\phi\:\Hy_{f_1} \to \Hy_{f_2}$ sending $R_1$ to $R_2$ and such that the following diagram commutes (where the vertical maps are the
coverings):
\[
  \begin{tikzcd}
    R_1 \dar[two heads] \rar["\phi"] & R_2 \dar[two heads] \\
    N_1 \rar["\varphi"] & N_2.
  \end{tikzcd}
\]
In terms of monodromy representations, this implies in particular that:
\[
\phi \Stab_{\Gamma_1}(R_1) \phi^{-1} = \Stab_{\Gamma_2}(R_2).
\]

Observe that such an isometry is always induced by some $A_\phi \in \aGL_n(\R)$ such that 
$f_2 \circ A_\phi = f_1$.
Let $k$ denote the composition of the fields $k_1$ and $k_2$.
Then the base extensions $\aPO_{f_1,k}$ and $\aPO_{f_2,k}$ are algebraic groups defined over $k$ and 
conjugation by $A_\phi$ induces an isomorphism of algebraic groups
$\Phi\: (\aPO_{f_1,k})_\R \to (\aPO_{f_2,k})_\R$.
The \emph{field of definition} of the extension $\phi$ is then the field of definition of the 
algebraic morphism $\Phi$, i.e., the smallest field $K$ with $k \subset K \subset \R$ over which 
$\Phi$ is defined (this field exists by \cite[Cor.~4.8.11]{EGA42}).

The next lemma (whose proof will be given in the next section) shows that this field actually only depends 
on the gluing isometry $\varphi$.
\begin{lemma} \label{lem:field-of-def}
  Any two extensions of a gluing isometry have the same field of definition.
\end{lemma}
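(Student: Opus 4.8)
The plan is to show that if $\phi\colon\Hy_{f_1}\to\Hy_{f_2}$ and $\phi'\colon\Hy_{f_1'}\to\Hy_{f_2'}$ are two extensions of the same gluing isometry $\varphi\colon N_1\to N_2$ (for possibly different choices of models $\tM_i\subset\Hy_{f_i}$, monodromy representations $\Gamma_i$, and hyperplane lifts $R_i$), then the fields of definition of the induced algebraic isomorphisms $\Phi$ and $\Phi'$ agree. First I would reduce to the case where the two extensions use the \emph{same} pair of models: given two models for $M_i$, there is an isometry $g_i\colon\Hy_{f_i}\to\Hy_{f_i'}$ intertwining the two monodromy representations, and since the trace field $k_i$ is the minimal field over which $\Gamma_i$ lies in $\aPO_{f_i}(k_i)$ (by Vinberg, as recalled in the excerpt), the isomorphism $\aPO_{f_i,k_i}\to\aPO_{f_i',k_i}$ induced by conjugation by $g_i$ is defined over $k_i$. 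Composing $\Phi'$ with these $k$-isomorphisms on both sides therefore does not change the field of definition, so it suffices to treat two extensions $\phi,\phi'$ relative to one fixed pair of models; I may also absorb the choice of hyperplane lift $R_i$ using an element of $\Gamma_i$ (which is $k_i$-rational), so I can take $R_1,R_2$ fixed as well.

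Next, with the models fixed, any two extensions $\phi,\phi'$ differ by an isometry on each side: $\phi'=\psi_2\circ\phi\circ\psi_1^{-1}$ where $\psi_i\colon\Hy_{f_i}\to\Hy_{f_i}$ fixes $R_i$ and descends to the identity on $N_i$. Concretely $\psi_1$ descends to the identity means $\psi_1$ stabilizes $R_1$ and acts on it as a deck transformation of the cover $R_1\to N_1$, i.e.\ $\psi_1\in\Stab_{\Gamma_1}(R_1)$; indeed the commuting-square condition forces $\psi_1$, restricted to $R_1$, to lie in the image of $\Stab_{\Gamma_1}(R_1)$, and since $\Stab_{\Gamma_1}(R_1)$ acts faithfully and $\psi_1$ is determined by its restriction to the hyperplane $R_1$ up to the reflection in $R_1$ — which also lies in $\aPO_{f_1}(k_1)$ — we get that $\psi_1\in\aPO_{f_1}(k_1)$. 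Likewise $\psi_2\in\aPO_{f_2}(k_2)$. Hence the conjugation automorphisms $\Psi_i$ induced by $\psi_i$ on $\aPO_{f_i,k}$ are defined over $k\subset K$, and since $\Phi'=\Psi_2\circ\Phi\circ\Psi_1^{-1}$, the morphisms $\Phi$ and $\Phi'$ are defined over exactly the same fields. Therefore their fields of definition coincide.

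The step I expect to be the main obstacle is the identification, in the previous paragraph, of the "slack" isometries $\psi_i$ as $k_i$-rational elements of $\aPO_{f_i}$ — that is, pinning down that an isometry of $\Hy_{f_i}$ that stabilizes the hyperplane $R_i$ and descends to the identity on $N_i=\Stab_{\Gamma_i}(R_i)\bs R_i$ must itself belong to $\Stab_{\Gamma_i}(R_i)$ up to the reflection $\sigma_{R_i}$ in $R_i$. The point is that an isometry of $\Hy_{f_i}$ fixing a hyperplane pointwise is either the identity or $\sigma_{R_i}$, while an isometry \emph{stabilizing} $R_i$ is determined by its restriction to $R_i$ together with a choice of side; so $\psi_i|_{R_i}$ being a deck transformation of $R_i\to N_i$ means $\psi_i|_{R_i}=\eta|_{R_i}$ for some $\eta\in\Stab_{\Gamma_i}(R_i)$, whence $\psi_i\in\{\eta,\sigma_{R_i}\eta\}$, and both $\eta$ (being in $\Gamma_i\subset\aPO_{f_i}(k_i)$) and $\sigma_{R_i}$ (the reflection in a $k_i$-defined hyperplane, since $R_i$ is cut out by a $k_i$-rational vector — its normal vector is fixed by $\Stab_{\Gamma_i}(R_i)$ which is Zariski-dense in a $k_i$-group, so spans a $k_i$-line) are $k_i$-rational. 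Once this rationality is in hand, the rest is the bookkeeping of conjugations described above, and the lemma follows.
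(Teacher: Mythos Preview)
Your proposal is correct and follows essentially the same route as the paper: reduce to fixed models via $k_i$-rational intertwiners, absorb the choice of hyperplane lift into $\Gamma_i$, and then identify the remaining ambiguity as an element of $\Stab_{\Gamma_i}(R_i)$ times possibly the reflection in $R_i$, both of which are $k_i$-rational. The only cosmetic differences are that the paper writes the slack as a single element $\phi^{-1}\phi'$ (your two-sided $\psi_2\circ\phi\circ\psi_1^{-1}$ is over-parameterized---you can always take $\psi_2=\id$), and the paper cites \cite[Lem.~2.1]{Emery-Mila} for the $k_1$-rationality of the reflection rather than arguing it from the Zariski-density of $\Stab_{\Gamma_1}(R_1)$ as you do.
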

This justifies the following definition:

\begin{definition}
  The \emph{field of definition} of a gluing isometry is the field of definition of any extension.
\end{definition}

\section{Main Theorem} \label{sec:main-theorem}

The goal of this section is to prove the main theorem of this article:
\begin{theorem}\label{th:tf-gluing-field-of-def-gluing-isom}
  Let $M$ be constructed by interbreeding $M_1$ with $M_2$ or closing up $M_1$.
  Then the trace field of $M$ coincides with the field of definition of the gluing isometry.
\end{theorem}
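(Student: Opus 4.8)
The plan is to work with explicit models for the pieces and show a two-way containment of fields. Fix models $\tM_i \subset \Hy_{f_i}$ for $M_i$ with monodromy $\Gamma_i \subset \aPO_{f_i}(k_i)$, choose hyperplane lifts $R_i$ of the gluing hypersurfaces, and choose an extension $\phi\colon \Hy_{f_1}\to\Hy_{f_2}$ of $\varphi$ induced by $A_\phi \in \aGL_{n+1}(\R)$. Let $K$ be its field of definition. The manifold $M$ is then $\Gamma\bs\tM$ where $\Gamma$ is generated by $\Gamma_1$ together with conjugates of $\Gamma_2$ by $\phi$ (equivalently, $\Gamma$ is built inside $\aPO_{f_1}$ by transporting $\Gamma_2$ via $\Phi^{-1}$): to glue along $R_1 \leftrightarrow R_2$ one realizes both pieces in the single hyperboloid $\Hy_{f_1}$, the first tautologically and the second via the isometry $\phi^{-1}$, so that $\tM = \tM_1 \cup \phi^{-1}(\tM_2)$ and $\Gamma = \langle \Gamma_1,\ \Phi^{-1}(\Gamma_2)\rangle$. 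The trace field of $M$ is $\Q(\tr\Ad\gamma : \gamma\in\Gamma)$.

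**The inclusion (trace field) $\subseteq K$.**

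Since $\Phi$ is defined over $K$ and $\Gamma_2 \subset \aPO_{f_2}(k_2) \subset \aPO_{f_2}(K)$, we get $\Phi^{-1}(\Gamma_2) \subset \aPO_{f_1}(K)$; and $\Gamma_1 \subset \aPO_{f_1}(k_1) \subset \aPO_{f_1}(K)$. Hence $\Gamma \subset \aPO_{f_1}(K)$, so all adjoint traces of elements of $\Gamma$ lie in $K$, giving the trace field $\subseteq K$. This direction is essentially immediate from the definitions.

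**The inclusion $K \subseteq$ (trace field): the main obstacle.**

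This is the hard part. Let $L$ denote the trace field of $M$; by Vinberg's theorem ($\cite[\text{Th.~1}]{Vinberg}$) applied to $M$, the ambient group of $\Gamma$ is some $\aPO_g$ over $L$ and $\Gamma \subset \aPO_g(L)$ after a suitable conjugation; moreover the ambient groups of the pieces $M_i$ (which embed into $\Gamma$ as the stabilizer-type subgroups $\Gamma_i$) must, by the discussion preceding Lemma~\ref{lem:field-of-def}, be recoverable over $L$ as well — indeed $\Gamma_1$ and $\Phi^{-1}(\Gamma_2)$ each generate Zariski-dense subgroups of copies of $\PO(n,1)^\circ$, and their invariant trace fields $k_1, k_2$ must therefore be contained in $L$ (using commensurability invariance and that a Zariski-dense subgroup of a piece inside $\Gamma$ sees only traces from $L$). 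So $k = k_1 k_2 \subseteq L$. Now the key point: over $L$ we have two $L$-structures on $\PO(n,1)^\circ$ coming from the two pieces — namely $\aPO_{f_1, L}$ (from $\Gamma_1$) and $\aPO_{f_2,L}$ (from $\Gamma_2$, which lives in $\Gamma$ after the conjugation as $\Phi^{-1}(\Gamma_2)$, i.e. as an $L$-subgroup of $\aPO_{f_1,L}$ up to $L$-conjugacy). The isometry $\phi$, equivalently the morphism $\Phi$, intertwines these; I must show $\Phi$ is already defined over $L$. Concretely: there is $P \in \aGL_{n+1}(L)$ conjugating the ambient group of $\Gamma$ into $\aPO_{f_1,L}$, and the subgroup $\Gamma_2 \subset \Gamma$ then lands inside $\aPO_{f_1}(L)$; but $\Gamma_2$ also lives in $\aPO_{f_2}(k_2)$ via its own model, and $\phi$ (i.e. $A_\phi$) is the comparison map. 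The uniqueness statement for ambient groups (Zariski-density forces uniqueness up to $k$-isomorphism) together with the fact that $\Gamma_2$ is Zariski-dense in its copy of $\PO(n-1,1)^\circ \subset \PO(n,1)^\circ$ should force the comparison isomorphism between the two $L$-models to be defined over $L$; composing with the (already $L$-defined, by Lemma~\ref{lem:field-of-def}'s argument that the field of definition only depends on $\varphi$) identifications of $R_i$ shows $\Phi$ descends to $L$, hence $K \subseteq L$.

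**Wrapping up.**

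The step I expect to fight with is making precise "the comparison isomorphism between the two $L$-models of $\PO(n,1)^\circ$ coming from the two pieces is automatically defined over $L$." I would handle it via the rigidity of Zariski-dense representations: an isomorphism of algebraic groups $\G_1 \to \G_2$ over $\R$ that sends a Zariski-dense subset $S \subset \G_1(L)$ into $\G_2(L)$ must be defined over $L$ (one computes the field of definition of the graph, which is contained in the field generated by images of points of $S$). Applying this with $S = \Gamma_2$ inside $\aPO_{f_2}$, whose image under $\Phi$ is $\Phi(\Gamma_2) = $ (the copy of $\Gamma_2$ sitting inside $\Gamma \subset \aPO_{f_1}(L)$ after conjugation), gives that $\Phi$ is defined over $L$, so $K \subseteq L$. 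Combined with the first inclusion, $K = L$, which is the theorem. Throughout, the non-connected-group bookkeeping (passing between $\aPO_f$ and $\aPO_f^\circ$, and between $\Gamma$ and $\Gamma \cap \PO(n,1)^\circ$) is routine given the almost-Zariski-density recalled in Section~\ref{sec:background}, and I would relegate it to remarks.
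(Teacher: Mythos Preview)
Your overall strategy matches the paper's: establish both containments between the trace field and the field of definition by (i) exhibiting $\Gamma$ inside $\aPO_{f_1}(K)$ via an explicit generating set, and (ii) using Zariski-density to force $\Phi$ to be defined over the trace field. For interbreeding this is essentially the paper's argument with the two inclusions written in the opposite order.

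There is, however, a genuine gap in the closing-up case. Your description $\Gamma = \langle \Gamma_1,\ \Phi^{-1}(\Gamma_2)\rangle$ is the amalgamated free product picture, correct for interbreeding, but closing up is an HNN extension: the monodromy is $\Gamma = \langle \Gamma_1,\ \eta\rangle$ for a stable letter $\eta \in \aPO_{f_1}(\R)$ sending one hyperplane lift to the other. The subgroup $\langle \Gamma_1,\ \eta^{-1}\Gamma_1\eta\rangle$ (which is what your formula gives when $M_1=M_2$) is strictly smaller and does not contain $\eta$. Consequently your ``easy'' inclusion $L \subseteq K$ breaks down here: knowing $\Gamma_1,\ \Phi^{-1}(\Gamma_1) \subset \aPO_{f_1}(K)$ does not yield $\eta \in \aPO_{f_1}(K)$. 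The paper handles this by factoring $\eta$ as a change-of-model isomorphism $\alpha$ (which conjugates $\Gamma_2$ onto $\Gamma_1$ and is therefore defined over $k_1$) composed with $\phi$; since both $\bar\alpha$ and $\Phi$ are defined over $K$ and $\aPO_{f_1}$ is adjoint, one gets $\eta \in \aPO_{f_1}(K)$. You need this extra step.

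A secondary point: your argument for $K \subseteq L$ spends effort comparing an abstract $L$-model $\aPO_g$ with $\aPO_{f_1}$. The paper streamlines this by starting from a trace-field model $\Gamma \subset \aPO_f(L)$ for $M$ and invoking Lemma~\ref{lem:trace-field-inclusion} (via the inclusion $M_1 \hookrightarrow M$) to identify $\aPO_f$ with $\aPO_{f_1}$ over $L$; then $\Gamma \subset \aPO_{f_1}(L)$ is immediate and the Zariski-density step applies directly to $\Phi^{-1}$. Your sketch gestures at this reduction but doesn't quite land it; the ``comparison isomorphism'' you worry about is precisely this Lemma~\ref{lem:trace-field-inclusion} argument, not a separate obstacle.
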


First we prove Lemma~\ref{lem:field-of-def}.
In what follows, we will often make use of the elementary fact that a morphism $\Phi\:\aPO_{f_1,\R} \to \aPO_{f_2,\R}$ sending an 
almost Zariski-dense subgroup $\Gamma \subset \aPO_{f_1}(k)$ into $\aPO_{f_2}(k)$ is itself defined over $k$ (see \cite[Prop.~2.13]{Mila-PhD}).
\begin{proof}[Proof of Lemma~\ref{lem:field-of-def}]
  Let $M_1, M_2$ be manifolds with boundary (as before, we allow the case $M_1 = M_2$ to include 
  closing ups).
  Let $\varphi\: N_1 \subset \bd M_1 \to N_2 \subset \bd M_2$ be a gluing isometry, let $\phi, \phi'$ be two extensions and let $\Phi, \Phi'$ be the corresponding 
  isomorphisms on the ambient groups.
  By definition, $\phi$ and $\phi'$ can be defined using arbitrary models for $M_1$ and $M_2$.
  The first step is to reduce the proof to the case where both $\phi$ and $\phi'$ are defined 
  on the same fixed models for $M_1$ and $M_2$.

  Let $\tM \subset \Hy_{f}$ and $\tM' \subset \Hy_{f'}$ be two models for $M_1$, with corresponding monodromy 
  representations $\Gamma \subset \aPO_f(k_1)$ and $\Gamma' \subset \aPO_{f'}(k_1)$ where $k_1$ is the trace 
  field 
  of $M_1$.
  By uniqueness of universal covers, there exists an isometry $\psi_1\: \tM \to \tM'$ which extends to an isometry
  $\psi_1\: \Hy_{f} \to \Hy_{f'}$, in such a way that 
  $\Psi_1(\Gamma) = \psi_1 \,\Gamma \,\psi^{-1}_1 = \Gamma'$, 
  where $\Psi_1 \:\aPO_{f,\R} \to \aPO_{f',\R}$ is the isomorphism induced by $\psi_1$.
  Since $\Gamma$ is (almost) Zariski-dense in $\aPO_f$ and $\Gamma' \subset \aPO_f(k)$, the isomorphism $\Psi_1$ is defined over $k_1$.
  Similarly, for $M_2$ any two choices of models give rise to such an isomorphism $\Psi_2$ of the ambient groups, which is defined over $k_2$, the trace field of $M_2$.

  Now changing the models of $M_1$ and $M_2$ in an extension $\phi$ of $\varphi$ amounts to replacing the induced isomorphism $\Phi$ with 
  $\Psi_1 \circ \Phi \circ \Psi_2^{-1}$, for $\Psi_1$, $\Psi_2$ defined as above.
  Since $\Psi_i$ is defined over $k_i$, and since the field of definition $k$ of $\Phi$ contains (by definition) 
  both $k_1$ and $k_2$, we see that $k$ equals the field of definition of $\Psi_1 \circ \Phi \circ \Psi_2^{-1}$.

  Therefore, for $i=1,2$, we can fix a model $\tM_i \subset \Hy_{f_i}$ for $M_i$ with monodromy representation $\Gamma_i \subset \aPO_{f_i}(k_i)$, and assume that 
  both extensions $\phi$ and $\phi'$ are maps $\Hy_{f_1} \to \Hy_{f_2}$.
  Moreover, since all hyperplane lifts $R_i \subset \bd \tM_i$ of $N_i \subset \bd M_i$ are in the same $\Gamma_i$-orbit, we can assume (up to replacing $\phi'$ by 
  $\gamma_2 \circ \phi' \circ\gamma_1$ for some $\gamma_1 \in \Gamma_1$ and $\gamma_2 \in \Gamma_2$) that there are hyperplane lifts $R_1 \subset \bd M_1$ of $N_1$ and 
  $R_2 \subset \bd M_2$ of $N_2$ such that both $\phi$ and $\phi'$ send $R_1$ to $R_2$.
  Observe that since conjugation by $\gamma_i \in \Gamma_i$ is an operation defined over $k_i$, the same argument as before implies that we do not change the 
  field of definition of $\Phi'$.

  Now since both $\phi$ and $\phi'$ are extensions of the same $\varphi\: N_1 \to N_2$,
  the isometry $\phi^{-1} \circ \phi'$ fixes $R_1$, and induces the trivial isometry $N_1 \to N_1$.
  Therefore, $\phi^{-1} \circ \phi' = \gamma$ or $\rho_1 \gamma$, where 
  $\gamma \in \Stab_{\Gamma_1}(R_1)$ and $\rho_1$ 
  is the reflection about $R_1$.
  Since $\gamma$ fixes $R_1$, we can again replace $\phi'$ with $\phi' \circ \gamma^{-1}$ 
  (as before without changing the field of definition of $\Phi'$) 
  to obtain that $\phi^{-1} \circ \phi' \in \{1, \rho_1\}$.
  It follows that $\Phi' = \Phi \circ \Psi$, where $\Psi \in \{\id, \text{conjugation by } \rho_1\}$.
  Since $\rho_1 \in \aPO_{f_1}(k_1)$ by \cite[Lem.~2.1]{Emery-Mila} we see that the fields of definition of $\Phi$ and $\Phi'$ coincide, as desired.
\end{proof}

Before going into the proof of Theorem \ref{th:tf-gluing-field-of-def-gluing-isom}, we give a small lemma.

\begin{lemma} \label{lem:trace-field-inclusion}
  Let $M_1$ be a piece of the manifold $M$.
  Let $k_1$ (resp.\ $k$) and $\aPO_{f_1}$ (resp.\ $\aPO_{f}$) denote the trace field and the ambient group of $M_1$ (resp.\ $M$).
  Then $k_1 \subset k$ and $\aPO_f \cong \aPO_{f_1, k}$.
\end{lemma}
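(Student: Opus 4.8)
The plan is to exploit the fact that the monodromy group of a piece $M_1$ of $M$ can be realized as a subgroup of the monodromy group of $M$. Write $M = \Gamma \bs \Hy^n$ with $\Gamma \subset \aPO_f(k)$ a model realizing the trace field $k$ of $M$. The piece $M_1$ is the completion of $M$ cut along disjoint hypersurfaces $N_1, \dots, N_r$; lifting to the universal cover, a choice of connected component of the preimage of $M_1$ in $\Hy^n$ gives an intersection of half-spaces $\tM_1 \subset \Hy_f$, and its stabilizer $\Gamma_1 = \Stab_\Gamma(\tM_1)$ is a model for $M_1$ in $\Hy_{f}$, i.e.\ $M_1 = \Gamma_1 \bs \tM_1$ with $\Gamma_1 \subset \aPO_f(k)$. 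First I would make this identification precise, citing the discussion in Section~\ref{sec:background} that $\tM_1$ is an intersection of half-spaces and $\Gamma_1$ is almost Zariski-dense in $\aPO_f$ (this uses that a piece of a finite-volume manifold still has almost-Zariski-dense monodromy, as recorded via \cite[Cor.~1.7b]{GPS}).

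Granting this model, the trace field $k_1$ of $M_1$ is by definition (via \cite[Th.~1]{Vinberg}) the smallest field over which some conjugate of $\Gamma_1$ lands in the $k_1$-points of an ambient group. Since $\Gamma_1 \subset \aPO_f(k)$ and $\Gamma_1$ is almost Zariski-dense in $\aPO_f$, and since the ambient group is unique up to isomorphism over the trace field, minimality forces $k_1 \subseteq k$; moreover the ambient group $\aPO_{f_1}$ of $M_1$ is, after base change to $k$, isomorphic to $\aPO_{f,k}$. Concretely: one has an isomorphism $\aPO_{f_1}(\R) \cong \aPO_f(\R) \cong \PO(n,1)^\circ$ (after passing to identity components) carrying $\Gamma_1 \cap \PO(n,1)^\circ$ into both $\aPO_{f_1}(k_1)$ and $\aPO_f(k)$; the induced isomorphism $\aPO_{f_1,\R} \to \aPO_{f,\R}$ sends the almost Zariski-dense subgroup $\Gamma_1 \cap \PO(n,1)^\circ \subset \aPO_{f_1}(k)$ (as $k_1 \subset k$) into $\aPO_f(k)$, hence by the elementary fact quoted before the proof of Lemma~\ref{lem:field-of-def} (namely \cite[Prop.~2.13]{Mila-PhD}) it is defined over $k$. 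This yields $\aPO_{f_1,k} \cong \aPO_{f,k}$, which is the asserted $\aPO_f \cong \aPO_{f_1,k}$ (reading $\aPO_f$ as already defined over $k$).

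The main obstacle, and the step deserving the most care, is the verification that $\Gamma_1 = \Stab_\Gamma(\tM_1)$ really is a legitimate model for $M_1$ with monodromy landing in $\aPO_f(k)$ — i.e.\ that cutting along the $N_i$ and taking completions corresponds exactly to restricting $\Gamma$ to the stabilizer of one component $\tM_1$ of the preimage, with no loss of information about the trace field. This is where one must use that the $N_i$ are genuine totally geodesic finite-volume hypersurfaces (so their lifts are hyperplanes in $\bd \Hy^n$, bounding the half-spaces whose intersection is $\tM_1$) and invoke Borel-density-type almost-Zariski-density for $\Gamma_1$ so that its trace field is well-defined and its ambient group unique. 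Once that is in place, the field inclusion and the ambient-group identification are immediate from Vinberg's characterization and the descent fact for morphisms; there are no serious computations.
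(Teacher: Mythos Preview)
Your proposal is correct and follows essentially the same route as the paper: realize the monodromy of $M_1$ as $\Stab_\Gamma(\tM_1)\subset\Gamma\subset\aPO_f(k)$ via a lift of the inclusion $M_1\hookrightarrow M$, deduce $k_1\subset k$, and then use almost-Zariski-density together with the descent fact \cite[Prop.~2.13]{Mila-PhD} to conclude that the induced isomorphism $\aPO_{f_1,\R}\to\aPO_{f,\R}$ is defined over $k$. The only cosmetic difference is that the paper gets $k_1\subset k$ in one line from the trace definition (traces of $\Ad\gamma$ for $\gamma\in\Gamma_1$ are already among those for $\Gamma$), whereas you invoke Vinberg's minimality characterization; and the paper sidesteps your ``main obstacle'' by starting from two independent models and simply lifting the inclusion, rather than first building the stabilizer model and then comparing.
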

\begin{proof}
  The inclusion of the trace fields is clear from the definition.
  For the ambient groups, let $M_1 = \Gamma_1 \bs \tM_1$ (resp.\ $M = \Gamma \bs \tM$) be models for 
  $M_1$ (resp.\ $M$) with $\Gamma_1 \subset \aPO_{f_1}(k_1)$ (resp.\ $\Gamma \subset \aPO_f(k)$).
  Then the inclusion $M_1 \inj M$ lifts to an isometry $\varphi\: \Hy_{f_1} \to \Hy_f$ 
  sending $\tM_1$ inside $\tM$ and such that $\varphi \Gamma_1 \varphi^{-1} = \Stab_{\Gamma}(\tM_1)$.
  Hence the isomorphism $\Phi\:\aPO_{f_1,\R} \to \aPO_{f,\R}$ induced by $\varphi$ sends 
  $\Gamma_1 \subset \aPO_{f_1}(k_1) \subset \aPO_{f_1}(k)$ into $\Gamma \subset \aPO_f(k)$.
  Since $\Gamma_1$ is (almost) Zariski-dense, $\Phi$ is defined over $k$.
\end{proof}

We are ready for the:
\begin{proof}[Proof of Theorem \ref{th:tf-gluing-field-of-def-gluing-isom}]
  As before, we set $M_2 = M_1$ in the closing up case.
  Let $\varphi$ denote the gluing isometry.
  For $i = 1,2$ let $\tM_i \subset \Hy_{f_i}$ be a model for $M_i$ with associated monodromy representation 
  $\Gamma_i \subset \aPO_{f_i}(k_i)$, where $k_i$ is the trace field of $M_i$, and let $\tM \subset \Hy_f$, 
  $\Gamma \subset \aPO_f(K)$ be defined similarly for $M$.
  Let $N_i \subset M_i$ denote the gluing hypersurfaces, and $R_i \subset \partial \tM_i$ some fixed hyperplane lifts.
  Finally let $N \subset M$ denote the glued hypersurface.
 
  By (the proof of) Lemma~\ref{lem:trace-field-inclusion}, $k_1 \subset K$ and we can assume 
  (up to replacing $f$ with $f_1$) that $\tM_1 \subset \tM \subset \Hy_{f_1}$ and that 
  the corresponding monodromy representation $\Gamma \subset \aPO_{f_1}(K)$ is such that
  $\Gamma_1 = \Stab_\Gamma(\tM_1)$.
  In particular we see that $R_1$ is a hyperplane lift of $N \subset M$.

  Similarly, the same argument gives an isometry $\alpha\: \Hy_{f_2} \to \Hy_{f_1}$ such that 
  $\alpha \Gamma_2 \alpha^{-1} = \Stab_\Gamma(\alpha \tM_2)$.
  We can make further assumptions on $\alpha$: in the interbreeding case we can choose it such that 
  $\alpha R_2 = R_1$ and $\Gamma = \la  \Gamma_1,  \alpha \Gamma_2 \alpha^{-1} \ra$, and in the closing up case,
  since $M_2 = M_1$, we can pick $\alpha$ so that $\alpha \tM_2 = \tM_1$ and 
  $\alpha \Gamma_2 \alpha^{-1} = \Gamma_1$.

  Now we define an isometry $\phi\: \Hy_{f_1} \to \Hy_{f_2}$.
  In the interbreeding case, we just set $\phi = \alpha^{-1}$.
  In the closing up case, it follows from the construction of $M$ that there is an
  $\eta \in \Gamma$ sending $R_1$ to $\alpha R_2$ such that $\Gamma = \la  \Gamma_1, \eta \ra$;
  we set $\phi = \alpha^{-1} \circ \eta$.
  In both cases, we have constructed an isometry $\phi\: \Hy_{f_1} \to \Hy_{f_2}$ sending 
  $R_1$ to $R_2$ which by construction induces the isometry $\varphi\: N_1 \to N_2$.
  In other words, $\phi$ is an extension of $\varphi$.
  
  Let $\Phi\: \aPO_{f_1,\R} \to \aPO_{f_2,\R}$ denote the isomorphism induced via conjugation by $\phi$.
  Let $k_\varphi$ be the field of definition of $\varphi$ (and recall that $K$ denotes the trace field of 
  $\Gamma$).
  By construction, $\Phi^{-1}$ sends the (almost) Zariski-dense subgroup 
  $\Gamma_2 \subset \aPO_{f_2}(K)$ into $\Gamma \subset \aPO_{f_1}(K)$.
  This shows that $\Phi^{-1}$ (hence also $\Phi$) is defined over $K$, and we get 
  $k_\varphi \subset K$.
  
  We now show the reverse inclusion: $K \subset k_\varphi$.
  For the interbreeding case, it suffices to contemplate the generating set 
  for $\Gamma$ given above to note that 
  $\Gamma \subset \aPO_{f_2}(k_\varphi)$, which implies $K \subset k_\varphi$.
  
  Finally for the closing up case we need to show that $\eta \in \aPO_{f_1}(k_\varphi)$.
  Since $\aPO_{f_1}$ is adjoint, it is equivalent to show that the isomorphism
  $\bar \eta\: \aPO_{f_1,\R} \to \aPO_{f_1,\R}$ induced via 
  conjugation is defined over $k_\varphi$.
  The $\alpha$ from above induces via conjugation a map $\bar \alpha\: \aPO_{f_2,\R} \to \aPO_{f_1,\R}$ 
  which sends $\Gamma_2$ to $\Gamma_1$.
  It is therefore defined over $k_1 = k_2$, and thus also over $k_\varphi$.
  Since $\eta = \alpha^{-1} \circ \phi$, we see that $\bar \eta$ is also defined over $k_\varphi$.
  This completes the proof of the closing up case, and thus also that of 
  Theorem~\ref{th:tf-gluing-field-of-def-gluing-isom}. 
\end{proof}

\section{The classical gluings} \label{sec:classical-gluings}

In this section we compute the trace field of the \emph{classical} gluings of Gromov and Piatetski-Shapiro 
\cite{GPS} and of their generalizations \cite{Raimbault, GL}.
In these constructions, the gluing isometry is usually not specified.
For nonarithmeticity, this is not needed, since gluing pieces of non-commensurable arithmetic manifolds 
always results in nonarithmetic manifolds (see the commensurability criterions \cite[§1.6]{GPS}, \cite[Prop.~2.1]{Raimbault} and \cite[Prop.~3.3]{GL}); we will give a proof of this using the trace field in Proposition~\ref{prop:gluing-containing-noncommensurable-nonarith}.
For trace field computations however, the knowledge of the gluing isometry is crucial.
Hence we first define \emph{canonical} gluing isometries for arithmetic pieces of a given shape.

Let $k \subset \R$ be a totally real number field, and let $f_0 = f_0(x_0, \dots, x_{n-1})$ 
be a quadratic form over $k$.
Assume $f_0$ has signature $(n,1)$ and that ${}^\sigma f_0$ is positive definite for all non-trivial 
embeddings $\sigma\:k \inj \R$ (we will say that $f_0$ is \emph{admissible}).
For any totally positive $a \in k$ set $f_a = f_0 + a x_n^2$.
From the conditions on $f_0$ and $a$, it follows that the image $\Gamma_a$ of the group
$\aO_{f_a}(\calO_k) = \aO_{f_a}(k) \cap \aGL_{n+1}(\calO_k)$ in $\aPO_{f_a}$ defines an arithmetic lattice 
in $\aPO_{f_a}(\R)$.
The hyperplane $R_a = \{x_n = 0\} \subset \Hy_{f_a}$ projects onto an 
immersed hypersurface $N_a = \Stab_{\Gamma_a}(R_a) \bs R_a$ in the orbifold $\Gamma_a \bs \Hy_{f_a}$.
More precisely, if $\Gamma_0$ is the image of $\aO_{f_0}(\calO_k)$ in $\aPO_{f_0}$, then:
\begin{enumerate}
  \item The map $\Hy_{f_0} \to R_a; x \mapsto (x,0)$ induces an orbifold isometry 
    \[
      \Gamma_0 \bs \Hy_{f_0} \cong \Stab_{\Gamma_a}(R_a) \bs R_a.
    \]
\end{enumerate}
Let $S_a$ be another hyperplane lift of $N_a$.
Then there exists $\gamma \in \Gamma_a$ with $\gamma S_a = R_a$, and thus:
\begin{enumerate}[resume]
  \item The map $S_a \to R_a; x \mapsto \gamma x$ induces an isometry 
    \[
      \Stab_{\Gamma_a}(S_a) \bs S_a \to \Stab_{\Gamma_a}(R_a) \bs R_a \cong \Gamma_0 \bs \Hy_{f_0}.
    \]
\end{enumerate}

Now for $a, b \in k$, assume that $\Gamma_a' \subset \Gamma_a$ and $\Gamma_b' \subset \Gamma_b$ 
are normal finite-index subgroups that are 
\emph{compatible}, in the sense that there is a fixed finite index subgroup 
$\Gamma_0' \subset \Gamma_0$ such that (1) holds for $\Gamma_0', \Gamma_a'$ and $\Gamma_0',\Gamma_b'$ in place 
of $\Gamma_0, \Gamma_a$.
This holds for instance for principal congruence subgroups of common level.
By normality, it is easy to see that (2) holds as well, with $\Gamma_a'$ (resp.\ $\Gamma_b'$) 
in place of $\Gamma_a$, for any choice of $\gamma \in \Gamma_a$ (resp.\ $\Gamma_b$) with 
$\gamma S_a = R_a$ (resp.\ $\gamma S_b = R_b$).
This gives (at least) one isometry $N_a' \cong \Gamma_0' \bs \Hy_{f_0} \cong N_b'$ for each hypersurfaces
$N_a' \subset \Gamma_a' \bs \Hy_{f_a}$ and $N_b' \subset \Gamma_b' \bs \Hy_{f_b}$ lifting $N_a$ and $N_b$ 
respectively.
We call such an isometry \emph{canonical}.

Next we define fundamental arithmetic pieces.
Let $\Lambda_a$ be a finite-index principal congruence subgroup of $\Gamma_a$; we can choose it such that:
\begin{enumerate}
  \item $\Lambda_a$ is torsion-free (Selberg's lemma) and $\Lambda_a \bs \Hy_{f_a}$ is orientable.
  \item The hyperplane $R_a$ projects down to an embedded orientable hypersurface $N_a'$ in 
    $\Lambda_a\bs \Hy_{f_a}$, by \cite[Lem.~3.1]{BT}.
\end{enumerate}
We will call \emph{fundamental arithmetic piece} a manifold obtained as the completion of 
$\Gamma_a' \bs \Hy_{f_a} - N_1 \cup \cdots \cup N_r$ where $\Gamma_a'$ is a subgroup of 
$\Lambda_a$, normal in $\Gamma_a$, and $N_i$ are disjoint lifts of $N_a'$ all isometric to $N_a'$.
By \cite[Prop.~4.3]{GL}, we can find fundamental arithmetic pieces with any even number of boundary 
components.
For later reference, we will also call the manifold $\Gamma_a' \bs \Hy_{f_a}$ a \emph{fundamental arithmetic manifold}.

Finally, for $i \in \{1, \dots, r\}$ let $a_i \in k$ be totally positive and let $M_i$ be a fundamental 
arithmetic piece of $\Gamma_{a_i}' \bs \Hy_{f_{a_i}}$.
By choosing a common level for the congruence subgroups $\Lambda_{a_i}$ above, one can ensure that 
all $\Gamma_{a_i}'$ are pairwise compatible.
This means that there is a fixed $\Gamma_0' \subset \Gamma_0$ (which is a congruence subgroup) such that 
the map $\Gamma_0' \bs \Hy_{f_0} \to \Gamma_{a_i} \bs \Hy_{f_{a_i}}$ is an isometric embedding.
In this context there are canonical isometries between all the boundary hypersurfaces of the $M_i$.
\begin{theorem} \label{th:tf-cannonical-arithmetic-gluings}
  Let $M$ be a gluing involving fundamental arithmetic pieces of the $\Gamma_{a_i}' \bs \Hy_{f_{a_i}}$.
  Assume all the gluing isometries are canonical. 
  Then the trace field of $M$ is 
  \[
    K = k(\sqrt{a_i a_j} \sep 1 \leq i, j \leq r) = k(\sqrt{a_1 a_2}, \dots, \sqrt{a_1 a_r}).
  \]  
\end{theorem}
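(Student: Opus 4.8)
The plan is to apply Theorem~\ref{th:tf-gluing-field-of-def-gluing-isom} repeatedly: since $M$ is built by a finite sequence of interbreedings and closing ups, and each such operation produces a manifold whose trace field is the field of definition of the corresponding gluing isometry, the trace field $K$ of $M$ will be the composite of the trace fields of the intermediate gluings, hence ultimately the composite of $k$ with the fields of definition of all the canonical gluing isometries used. So the heart of the matter is: \emph{compute the field of definition of a single canonical gluing isometry} identifying a hypersurface lifting $N_{a_i}$ in (a piece of) $\Gamma_{a_i}' \bs \Hy_{f_{a_i}}$ with one lifting $N_{a_j}$ in (a piece of) $\Gamma_{a_j}' \bs \Hy_{f_{a_j}}$. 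I claim this field is $k(\sqrt{a_i a_j})$, and granting that, the theorem follows: each gluing contributes $k(\sqrt{a_i a_j})$, and the composite of all of them over the relevant index pairs is $k(\sqrt{a_i a_j} \sep 1 \le i,j \le r)$. The displayed simplification $k(\sqrt{a_1a_2},\dots,\sqrt{a_1a_r})$ is then pure field theory: $\sqrt{a_ia_j} = \sqrt{a_1a_i}\cdot\sqrt{a_1a_j}/a_1 \in k(\sqrt{a_1a_i},\sqrt{a_1a_j})$ (using $a_i \in k$), and conversely each $\sqrt{a_1a_i}$ is among the generators; also $\sqrt{a_ia_i}=a_i \in k$, so the diagonal terms are harmless.

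For the single-gluing computation, I would use the models built into the setup. Both pieces live in hyperboloids over $k$: the ambient form for the $a_i$-side is $f_{a_i} = f_0 + a_i x_n^2$ over $k$, and the hypersurface lift is $R_{a_i} = \{x_n = 0\}$, whose ambient form (the restriction) is $f_0$. Since the canonical isometry $N_{a_i}' \to N_{a_j}'$ is, by construction, the one induced by the common identification with $\Gamma_0' \bs \Hy_{f_0}$ via $x \mapsto (x,0)$, there is a distinguished extension $\phi\colon \Hy_{f_{a_i}} \to \Hy_{f_{a_j}}$: after moving the chosen lifts to $R_{a_i}, R_{a_j}$ by elements of $\Gamma_{a_i}, \Gamma_{a_j}$ (defined over $k$, so irrelevant to the field of definition by the proof of Lemma~\ref{lem:field-of-def}), one can take $\phi$ to be induced by the linear map $A_\phi$ that is the identity on the first $n$ coordinates and sends the last coordinate $x_n \mapsto \sqrt{a_i/a_j}\, x_n$, which indeed satisfies $f_{a_j} \circ A_\phi = f_{a_i}$. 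The induced isomorphism $\Phi$ of the $\R$-forms $\aPO_{f_{a_i},\R} \to \aPO_{f_{a_j},\R}$ is conjugation by $A_\phi$; I would then argue that $\Phi$ is defined over $k(\sqrt{a_i/a_j}) = k(\sqrt{a_ia_j})$ (clear, since $A_\phi$ has entries in that field) and that it is not defined over any smaller field. For the latter: if $\Phi$ were defined over a subfield $L$ with $[k(\sqrt{a_ia_j}):L]$ proper, then $L = k$ (as $k(\sqrt{a_ia_j})/k$ is either trivial or quadratic), and one shows that $\Phi$ being $k$-defined would force an isomorphism of the $k$-groups $\aPO_{f_{a_i},k} \cong \aPO_{f_{a_j},k}$ carrying the $k$-structure compatibly, which—by the standard dictionary between inner forms of $\aPO$ and quadratic forms up to similarity (as referenced via \cite{LiMillson})—would imply $f_{a_i}$ and $f_{a_j}$ are similar over $k$; comparing discriminants (or using that $f_0$ is common and $a_i/a_j$ is the ratio of the extra entries) gives $a_i/a_j \in (k^\times)^2$, i.e.\ $\sqrt{a_ia_j} \in k$. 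So the field of definition is exactly $k(\sqrt{a_ia_j})$.

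The main obstacle I anticipate is the lower bound $k(\sqrt{a_ia_j}) \subset k_\varphi$, i.e.\ showing the canonical gluing isometry is genuinely \emph{not} definable over $k$ when $a_i/a_j$ is not a square. One has to be careful that "canonical" really does produce this $\phi$ and not some twisted version, and that the reduction steps in Lemma~\ref{lem:field-of-def} (conjugating by elements of $\Gamma_{a_i}, \Gamma_{a_j}$, and possibly composing with the reflection $\rho_{R}$, all defined over $k$ by \cite[Lem.~2.1]{Emery-Mila} and \cite[Lem.~2.1]{Emery-Mila}-type facts) do not accidentally change the quadratic contribution. Concretely I would pin down $\Phi$ up to $k$-defined modifications and then invoke the similarity-of-forms criterion. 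A secondary bookkeeping point is the induction over the gluing tree: at an intermediate stage the two pieces being glued may themselves be nontrivial gluings with trace field already larger than $k$, so I must use Lemma~\ref{lem:trace-field-inclusion} to know the ambient groups are just base changes of the $\aPO_{f_{a_i}}$, ensuring the same computation of the field of definition of the new canonical isometry still yields the appropriate $k(\sqrt{a_ia_j})$ and composes correctly. Once these are in place, collecting the contributions over all gluings used in the construction of $M$ gives $K = k(\sqrt{a_ia_j} \sep 1 \le i,j \le r)$, as claimed.
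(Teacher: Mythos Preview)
Your overall strategy---induct on the gluing steps, apply Theorem~\ref{th:tf-gluing-field-of-def-gluing-isom} at each step, and exhibit the explicit diagonal extension $A_\phi = \mathrm{diag}(I,\sqrt{a_i/a_j})$ pre- and post-composed with elements of $\Gamma_{a_i},\Gamma_{a_j}$---matches the paper's, as do the upper bound and the field-theoretic simplification at the end.

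The genuine gap is your lower bound. You argue: if $\Phi$ were defined over $k$ then $\aPO_{f_{a_i}} \cong_k \aPO_{f_{a_j}}$, hence $f_{a_i}$ and $f_{a_j}$ are similar over $k$, hence ``comparing discriminants'' forces $a_i/a_j \in (k^\times)^2$. The last implication is false whenever $n$ is even: similar forms in an odd number of variables need not share a discriminant, and indeed Theorem~\ref{th:realization-trace-fields} and Examples~\ref{ex:gluings-Q-0-mod-4}--\ref{ex:gluings-Qsqrt2} later in this paper explicitly produce $f_{a_i}$ similar to $f_{a_j}$ over $k$ with $a_i/a_j$ \emph{not} a square. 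Your parenthetical does not help either, since similarity only gives $f_0\perp\langle a_i\rangle \cong \lambda f_0\perp\langle \lambda a_j\rangle$, and Witt cancellation does not apply across the scaling. So from the mere existence of a $k$-isomorphism $\aPO_{f_{a_i}}\to\aPO_{f_{a_j}}$ you cannot conclude that \emph{this particular} $\Phi$ fails to descend---and in the interesting (even-dimensional) case your contradiction never materializes. The fix bypasses similarity of forms entirely: if $\sqrt{a_ia_j}\notin k_1k_2$ and $\sigma$ is the nontrivial element of $\mathrm{Gal}\bigl(k_1k_2(\sqrt{a_ia_j})/k_1k_2\bigr)$, then $\sigma(A_\phi)=A_\phi\cdot\rho$ with $\rho=\mathrm{diag}(I,-1)$ the reflection in $R_{a_i}$, so ${}^\sigma\Phi=\Phi\circ(\text{conj.\ by }\rho)\neq\Phi$ since $\rho$ is not central in $\aPO_{f_{a_i}}$. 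This is what the paper's ``clearly'' abbreviates; it works uniformly at every inductive step and is precisely the mechanism reused in the proofs of Proposition~\ref{prop:odd-dimensional-closing-up} and Lemma~\ref{lem:closing-up-increase-trace-field}.
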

\begin{proof}
  At each gluing step, an extension of the gluing isometry is:
  \[
    \begin{tikzcd}
      \phi\:\Hy_{f_{a_i}} \rar["\gamma_i"] & \Hy_{f_{a_i}} \rar["\psi"] & \Hy_{f_{a_j}} \rar["\gamma_j"] & 
      \Hy_{f_{a_j}}
    \end{tikzcd}
  \]  
  where $\gamma_i \in \Gamma_{a_i}, \gamma_j \in \Gamma_{a_j}$ and $\psi$ is induced by 
  $x \mapsto Ax$, where $A$ is the matrix 
  \[
    A = \begin{pmatrix}
      I & 0 \\
      0 & \sqrt{a_j / a_i}
    \end{pmatrix}.
  \]  
  If the gluing happens between the manifolds $M_1$ and $M_2$ (of trace fields $k_1$ and $k_2$ respectively), then 
  the field of definition of $\phi$ is clearly the composition of the fields $k_1$, $k_2$ and $k(\sqrt{a_j/a_i}) = k(\sqrt{a_ia_j})$.
  The theorem follows by induction.
\end{proof}

Two particular instances of this theorem are:
\begin{corollary}\label{cor:GPS}
  Let $M$ be a gluing of Gromov--Piatetski-Shapiro \cite{GPS}. 
  Assume the pieces used are fundamental arithmetic pieces constructed using the quadratic 
  forms $f_a$ and $f_b$ defined over $k$, and that the gluing isometry is canonical.
  Then the trace field of $M$ is $k(\sqrt{ab})$.
\end{corollary}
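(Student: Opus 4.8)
The plan is to obtain Corollary~\ref{cor:GPS} as a direct specialization of Theorem~\ref{th:tf-cannonical-arithmetic-gluings}. First I would recall the shape of the Gromov--Piatetski-Shapiro construction in the present language: one fixes an admissible form $f_0$ over the totally real field $k$, chooses totally positive $a, b \in k$, forms the arithmetic manifolds attached to $f_a = f_0 + a x_n^2$ and $f_b = f_0 + b x_n^2$, passes to congruence covers of a common level so that the hypersurface $\Gamma_0' \bs \Hy_{f_0}$ embeds in both $\Gamma_a' \bs \Hy_{f_a}$ and $\Gamma_b' \bs \Hy_{f_b}$, and then glues a fundamental arithmetic piece of $\Gamma_a' \bs \Hy_{f_a}$ to one of $\Gamma_b' \bs \Hy_{f_b}$ along the canonical isometry identifying these two copies of $\Gamma_0' \bs \Hy_{f_0}$. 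This is precisely a gluing in the sense of Definition~\ref{def:gluings} whose pieces are fundamental arithmetic pieces of the $\Gamma_{a_i}' \bs \Hy_{f_{a_i}}$, with the set of parameters $\{a_i\}$ equal to $\{a, b\}$ (in the original two-piece case $r = 2$, $a_1 = a$, $a_2 = b$; for the generalizations with more boundary components each $a_i$ still lies in $\{a, b\}$), and with all gluing isometries canonical.

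Next I would simply invoke Theorem~\ref{th:tf-cannonical-arithmetic-gluings}, which gives that the trace field of $M$ is $k(\sqrt{a_i a_j} \sep 1 \le i, j \le r)$. The only remaining task is the elementary observation that this set of generators collapses: whenever $a_i = a_j$ the radicand $a_i a_j = a_i^2$ is already a square in $k$ and contributes nothing, whereas a mixed pair $(a_i, a_j) = (a, b)$ contributes $\sqrt{ab}$ (and if $\sqrt{ab} \in k$ to begin with, it again contributes nothing, and the formula $k(\sqrt{ab})$ still holds, trivially). Since both shapes $f_a$ and $f_b$ occur among the pieces, such a mixed pair is present, so $k(\sqrt{a_i a_j} \sep i, j) = k(\sqrt{ab})$, which is the claim.

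I do not anticipate a genuine obstacle: the mathematical content is entirely contained in Theorem~\ref{th:tf-cannonical-arithmetic-gluings}, and the corollary is a one-line appeal to it followed by the field computation above. The only point deserving (minimal) care is to verify that the Gromov--Piatetski-Shapiro manifold literally meets the hypotheses of that theorem --- i.e.\ that after passing to a common congruence level the pieces are genuinely \emph{fundamental} arithmetic pieces, that the boundary hypersurfaces carry the \emph{canonical} identifications defined above, and that both forms $f_a$ and $f_b$ are actually used --- all of which are guaranteed by the way the construction is set up and by the explicit hypotheses of the corollary.
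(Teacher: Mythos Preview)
Your proposal is correct and matches the paper's approach exactly: the paper introduces Corollaries~\ref{cor:GPS} and~\ref{cor:GL} with the sentence ``Two particular instances of this theorem are:'' and gives no further argument, so the intended proof is precisely the specialization of Theorem~\ref{th:tf-cannonical-arithmetic-gluings} to the parameter set $\{a_1,a_2\}=\{a,b\}$ that you spell out.
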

\begin{corollary}\label{cor:GL}
  Let $M$ be a gluing of Gelander--Levit \cite{GL}.
  Assume the building blocks are fundamental arithmetic pieces constructed using the quadratic 
  forms $f_{a_1}, \dots, f_{a_6}$ defined over $k$, and that the gluing isometries are canonical.
  Then the trace field of $M$ is $k(\sqrt{a_1a_2}, \dots, \sqrt{a_1a_6})$.
\end{corollary}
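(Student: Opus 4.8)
The plan is to deduce this from Theorem~\ref{th:tf-cannonical-arithmetic-gluings}, of which Corollary~\ref{cor:GL} is essentially a special case together with an elementary simplification of the resulting field. First I would recall the structure of the construction in \cite{GL}: the manifold $M$ is obtained as a gluing (Definition~\ref{def:gluings}) of building blocks which, after passing to the appropriate congruence covers, are fundamental arithmetic pieces of fundamental arithmetic manifolds $\Gamma_{a_i}' \bs \Hy_{f_{a_i}}$, $i = 1, \dots, 6$, where $f_{a_i} = f_0 + a_i x_n^2$ is admissible over $k$. Choosing a common congruence level for the $\Lambda_{a_i}$ makes the $\Gamma_{a_i}'$ pairwise compatible, so that every boundary gluing can be taken along a canonical isometry. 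This first step is pure bookkeeping, but it is where the actual work lies: one has to match the normalizations of \cite{GL} with those fixed in this section, i.e.\ check that the blocks of \cite{GL} are (up to finite covers) fundamental arithmetic pieces in the present sense and that the gluing pattern uses canonical isometries.

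Once that dictionary is in place, Theorem~\ref{th:tf-cannonical-arithmetic-gluings} applies verbatim with $r = 6$ and gives that the trace field of $M$ is $K = k(\sqrt{a_i a_j} \sep 1 \leq i, j \leq 6)$. It then remains to simplify this field. The diagonal terms $\sqrt{a_i a_i} = a_i$ lie in $k$ and can be dropped, so $K = k(\sqrt{a_i a_j} \sep 1 \leq i < j \leq 6)$. Since $a_1 \in k$, and since $\sqrt{a_1 a_i}\,\sqrt{a_1 a_j}$ and $a_1 \sqrt{a_i a_j}$ both square to $a_1^2 a_i a_j$, they differ only by a sign; hence $\sqrt{a_i a_j} \in k(\sqrt{a_1 a_i}, \sqrt{a_1 a_j})$ for all $i,j$. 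Therefore $K = k(\sqrt{a_1 a_2}, \dots, \sqrt{a_1 a_6})$, as claimed.

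The only genuine obstacle is the identification just mentioned of the \cite{GL} building blocks and gluings with the framework of this section; the remainder is the single invocation of the main theorem above together with a two-line field computation. (Corollary~\ref{cor:GPS} is the same argument with $r = 2$ and forms $f_a, f_b$, giving $K = k(\sqrt{ab})$.)
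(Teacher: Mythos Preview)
Your proposal is correct and matches the paper's approach: the paper presents Corollary~\ref{cor:GL} (and Corollary~\ref{cor:GPS}) as an immediate instance of Theorem~\ref{th:tf-cannonical-arithmetic-gluings} with $r=6$, without a separate proof. Note that the field simplification you carry out in your last paragraph is already part of the statement of Theorem~\ref{th:tf-cannonical-arithmetic-gluings} (which asserts $k(\sqrt{a_ia_j}\mid 1\le i,j\le r)=k(\sqrt{a_1a_2},\dots,\sqrt{a_1a_r})$), so that step is redundant, though harmless.
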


The key fact is that Theorem~\ref{th:tf-cannonical-arithmetic-gluings} can be used with pieces of commensurable manifolds to 
produce nonarithmetic manifolds.

\begin{theorem} \label{th:realization-trace-fields}
  Let $K = \Q(\sqrt{a_1}, \dots, \sqrt{a_r})$ be an arbitrary totally real multiquadratic extension of $\Q$.
  Then for $n \equiv 2 \pmod 4$ there exists an $n$-manifold $M$ of trace field $K$ obtained as a 
  gluing of pieces of \emph{pairwise commensurable} arithmetic manifolds of trace field $\Q$.
\end{theorem}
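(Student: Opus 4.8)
The plan is to reduce Theorem~\ref{th:realization-trace-fields} to a direct application of Theorem~\ref{th:tf-cannonical-arithmetic-gluings}. Given the target field $K = \Q(\sqrt{a_1}, \dots, \sqrt{a_r})$, the first step is to choose the ``base'' admissible quadratic form. Since we want trace field $\Q$ for the pieces and $n \equiv 2 \pmod 4$, I would take $f_0 = \la 1, \dots, 1, -1\ra$ over $k = \Q$ of signature $(n,1)$, so that the arithmetic groups $\Gamma_a = \aO_{f_a}(\Z)$ with $f_a = f_0 + a x_n^2$ all have trace field $\Q$ (and their ambient group is the rational form $\aPO_{f_a}$). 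Here one must check admissibility: $\Q$ has no nontrivial real embeddings, so the only condition is that $f_0$ has signature $(n,1)$, which holds. One also needs each $a_i$ (after possibly clearing denominators and squares, replacing $a_i$ by a totally positive integer in the same square class) to be positive so that $f_{a_i}$ still has signature $(n,1)$; positivity is no loss since we may multiply each $a_i$ by a square.

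Next I would invoke Theorem~\ref{th:tf-cannonical-arithmetic-gluings} with the following choice: take $r+1$ parameters, namely $a_0 = 1$ and $a_1, \dots, a_r$ as given, and form fundamental arithmetic pieces $M_0, M_1, \dots, M_r$ of the fundamental arithmetic manifolds $\Gamma_{a_i}' \bs \Hy_{f_{a_i}}$, each with an appropriate (even) number of boundary components so that they can be assembled into a connected gluing $M$ — this is possible by \cite[Prop.~4.3]{GL} and the discussion preceding Theorem~\ref{th:tf-cannonical-arithmetic-gluings}, after choosing a common congruence level so that all the $\Gamma_{a_i}'$ are pairwise compatible and all gluing isometries can be taken canonical. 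By Theorem~\ref{th:tf-cannonical-arithmetic-gluings}, the trace field of $M$ is then
\[
  \Q\bigl(\sqrt{a_i a_j} \sep 0 \leq i, j \leq r\bigr) = \Q\bigl(\sqrt{a_0 a_1}, \dots, \sqrt{a_0 a_r}\bigr) = \Q(\sqrt{a_1}, \dots, \sqrt{a_r}) = K,
\]
using $a_0 = 1$. (If $r = 1$ one may simply use Corollary~\ref{cor:GPS} with $a = 1$, $b = a_1$.)

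Finally I would note that all pieces are pieces of manifolds of trace field $\Q$, and that the manifolds $\Gamma_{a_i}' \bs \Hy_{f_{a_i}}$ are \emph{pairwise commensurable}: they are all commensurable to the arithmetic group defined by $f_0$ over $\Q$ (each $\Gamma_{a_i}$ contains a congruence subgroup isomorphic to one in $\Gamma_{a_j}$ via the common subsurface group $\Gamma_0'$, and all of these are arithmetic subgroups of the $\Q$-group $\aPO_{f_0}$, hence commensurable). For arithmeticity of $M$ itself: if $[K:\Q] > 1$ then the trace field $K \neq \Q$ is strictly larger than the trace field $\Q$ of an arithmetic piece, and since (by \cite[Prop.~2.5]{PR}) an arithmetic manifold has trace field equal to its field of definition and the trace field is a commensurability invariant \cite[Th.~3]{Vinberg}, $M$ cannot be commensurable to an arithmetic manifold — though for the present statement we need only the field computation. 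The main obstacle is purely bookkeeping: one must verify that the compatibility/common-level conditions of the setup can be met simultaneously for all $r+1$ parameters and that the pieces can be chosen with matching (even) numbers of boundary hypersurfaces to form a connected gluing; both are handled by the cited results of \cite{GL} and \cite{BT}, so no genuinely new argument is required beyond assembling them.
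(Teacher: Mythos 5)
Your reduction to Theorem~\ref{th:tf-cannonical-arithmetic-gluings} (with the extra parameter $a_0=1$ so that $\Q(\sqrt{a_0a_i})=\Q(\sqrt{a_i})$) matches the paper's strategy and correctly produces a canonical gluing with trace field $K$. However, your justification of the key claim --- that the ambient arithmetic manifolds are \emph{pairwise commensurable} --- is wrong, and that claim is precisely the nontrivial content of the theorem. The lattices $\Gamma_{a_i}$ are arithmetic subgroups of the $(n{+}1)$-variable groups $\aPO_{f_{a_i}}$, not of $\aPO_{f_0}$; the $n$-variable group $\aPO_{f_0}$ is only the ambient group of the common \emph{hypersurface}. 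Sharing a totally geodesic hypersurface (equivalently, sharing the subgroup $\Gamma_0'$) does not imply commensurability of the ambient lattices --- if it did, the Gromov--Piatetski-Shapiro construction could never produce nonarithmetic manifolds from pieces of non-commensurable ones. Commensurability of $\Gamma_{a_i}$ and $\Gamma_{a_j}$ is equivalent to similarity of the forms $f_{a_i}$ and $f_{a_j}$ over $\Q$, and this must be proved.

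The paper does so by showing $af_0\cong f_0$ for every positive $a\in\Q$, whence $a_jf_{a_i}\cong a_if_{a_j}$: the discriminants agree because $f_0$ has an even number $n$ of variables, and the Hasse--Witt invariants satisfy $\epsilon_p(af_0)=(a,a)_p^{(n-1)(n-2)/2}\,(a,-a)_p^{n-1}=1$ since $(a,-a)_p=1$ and $(n-1)(n-2)/2$ is even exactly when $n\equiv 2\pmod 4$. This is the only place the hypothesis $n\equiv 2\pmod 4$ is used, and your proposal never explains where that hypothesis enters --- a symptom of the missing step (compare Example~\ref{ex:gluings-Q-0-mod-4}, where $n\equiv 0\pmod 4$ forces a restriction on the $a_i$). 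So the remaining work is not ``purely bookkeeping'': without the Hasse--Minkowski computation the pieces need not come from commensurable manifolds, and the statement as formulated is not established.
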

\begin{proof}
  Consider the quadratic forms 
  \[
  f_0 = - x_0^2 + x_1^2 + \cdots + x_{n-1}^2 \quad \text{and} \quad f_i = f_0 + a_i x_n^2.
  \]
  Theorem~\ref{th:tf-cannonical-arithmetic-gluings} implies the existence of a 
  gluing $M$ with trace field $K$ constructed from fundamental arithmetic pieces using 
  the quadratic forms $f_i$ with canonical gluing isometries.
  Thus we only need to show that all the arithmetic groups are commensurable, i.e., 
  that the forms $f_i$ are similar.

  It suffices to show that $a f_0 \cong f_0$ for any positive $a \in \Q$, since then we have 
  $a_j f_i \cong a_i f_0 + a_i a_j x_n^2 = a_i f_j$.
  The discriminants of $f_0$ and $af_0$ coincide, since the number of variables is even.
  Moreover, setting $\delta_n = -1$ and $\delta_i = 1$ for $i < n$, one has for any prime $p$:
  \[
  \epsilon_p(a f_0) = \prod_{i<j} (a \delta_i, a \delta_j)_p = 
  (a,a)_p ^{\frac{(n-1)(n-2)}{2}} \, (a,-a)_p^{n-1}.
  \]
  The term $(a, -a)_p$ equals 1 by standard properties of the Hilbert symbol, and since 
  $n \equiv 2 \pmod 4$, the exponent of $(a, a)_p$ is even.
  Therefore $\epsilon_p(a f_0) = 1 = \epsilon_p(f_0)$ for any prime $p$ and any $a$.
\end{proof}

We end this section by giving both compact and non-compact examples in any even dimension of gluings of pieces of commensurable 
arithmetic manifolds realizing trace fields of arbitrary large degree.
These together with Theorem~\ref{th:realization-trace-fields} imply Theorem~\ref{th:intro-commensurable-large-trace-field} of 
the introduction.

\begin{example}\label{ex:gluings-Q-0-mod-4}
  Let $n \equiv 0 \pmod 4$ and let $f_0$ be the quadratic form from the proof of 
  Theorem~\ref{th:realization-trace-fields}.
  For $a \in \Z$ we have $\epsilon_p(af_0) = (a,a)_p (a,-a)_p = (a, -1)_p = \left(\frac{-1}{p}\right)^{v_p(a)}$ 
  for each prime $p$.
  For $r \geq 1$, let $a_1, \dots, a_r$ be distinct primes $\equiv 1 \pmod 4$; they exist by Dirichlet's 
  Theorem on arithmetic progressions.
  Such $a_i$ fulfill $\left(\frac{-1}{a_i}\right) = 1$, and it follows that $a_i f_0 \cong f_0$.
  By the same arguments as in the proof of Theorem~\ref{th:realization-trace-fields} one obtains a gluing $M$ of trace field 
  $\Q(\sqrt{a_1}, \dots, \sqrt{a_r})$ of degree $2^r$, for any $r \geq 1$.
\end{example}

\begin{example}\label{ex:gluings-Qsqrt2}
  Let $k = \Q(\sqrt{2})$ and consider for even $n = 2m$ the quadratic form 
  $f_0 = -\sqrt{2} x_0^2 + x_1^2 + \cdots + x_{n-1}^2$.
  Let $A = \{a_1, a_2, \dots\}$ be the set of primes in $k$ that split in $k(\sqrt{-\sqrt{2}})$ (resp.\ 
  $k(\sqrt{\sqrt{2}})$) if $n \equiv 0 \pmod 4$ \linebreak (resp.\ $n \equiv 2 \pmod 4$).
  Then $A$ is infinite (a consequence of Chebotarev's Density Theorem).
  We claim that $a_i f_0 \cong f_0$ for every $i \geq 1$.
  Indeed, as in the proof of Theorem~\ref{th:realization-trace-fields}, the discriminants of $a_i f_0$ and 
  $f_0$ coincide, and for each place $v$ of $k$, we have 
  \[
    \epsilon_v(a_i f_0) = (-\sqrt{2} a_i, a_i)_v^{n-1} \, (a_i, a_i)_v^{\frac{(n-1)(n-2)}{2}}.
  \]
  If $n \equiv 0 \pmod 4$, this quantity equals $(- \sqrt{2}a_i, a_i)_v (a_i, a_i)_v = (-\sqrt{2}, a_i)_v$.
  Since $a_i$ splits in $k(\sqrt{-\sqrt{2}})$, we have $a_i = b^2 + \sqrt{2} c^2$ for some $b,c \in k$. \linebreak
  Hence the equation $-\sqrt{2} x^2 + a_iy^2 = z^2$ has a solution in $k$, and thus $(-\sqrt{2}, a_i)_v = 1$ for each place $v$.
  Similarly, if $n \equiv 2 \pmod 4$, we have $\epsilon_v(a_i f_0) = (\sqrt{2}, a_i)_v = 1$ for each place $v$.
  Proceeding again as in the proof of Theorem~\ref{th:realization-trace-fields}, we 
  get a manifold $M$ of trace field $k(\sqrt{a_1}, \dots, \sqrt{a_r})$ of degree $2^r$, for any 
  $r \geq 1$.
\end{example}

\section{Corollaries and auxiliary results} \label{sec:corollaries}

This section is devoted to general results and consequences of Theorem~\ref{th:tf-gluing-field-of-def-gluing-isom} about gluing 
constructions involving arbitrary manifolds.
We start with a proposition which follows actually directly from Lemma~\ref{lem:trace-field-inclusion}.
\begin{proposition} \label{prop:gluing-containing-noncommensurable-nonarith}
  Let $M$ be a gluing involving (among others) two manifolds with boundary $M_1$ and $M_2$. 
  Assume that $M_1$ and $M_2$ are pieces of non-commensurable arithmetic manifolds.
  Then $M$ is nonarithmetic.
\end{proposition}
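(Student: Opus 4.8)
The plan is to argue by contradiction, assuming $M$ is arithmetic and deducing that the two arithmetic manifolds of which $M_1$ and $M_2$ are pieces must be commensurable. Write $W_i$ for an arithmetic manifold having $M_i$ as a piece. First I would observe that, by cutting $M$ along the finitely many totally geodesic hypersurfaces used in the gluing, each $M_i$ is also a piece of $M$ itself. The whole strategy is then to play the two facts ``$M_i$ is a piece of $M$'' and ``$M_i$ is a piece of $W_i$'' against each other through Lemma~\ref{lem:trace-field-inclusion}.

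Concretely: suppose $M$ is arithmetic, with trace field $k$ and ambient group $\aPO_f$; by \cite[Prop.~2.5]{PR}, $k$ is totally real and $f$ is admissible over $k$. Let $k_i$, $\aPO_{f_i}$ denote the trace field and ambient group of $M_i$, and $\ell_i$, $\aPO_{g_i}$ those of $W_i$ (so $g_i$ is admissible over $\ell_i$). Applying Lemma~\ref{lem:trace-field-inclusion} to $M_i$ as a piece of $M$ gives $k_i \subseteq k$ and $\aPO_f \cong \aPO_{f_i,k}$, so $f$ is similar over $k$ to the base change $(f_i)_k$, which is therefore admissible over $k$. Applying it to $M_i$ as a piece of $W_i$ gives $k_i \subseteq \ell_i$ and $\aPO_{g_i} \cong \aPO_{f_i,\ell_i}$, so $g_i$ is similar over $\ell_i$ to $(f_i)_{\ell_i}$, which is therefore admissible over $\ell_i$.

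The heart of the argument is a signature count showing that both extensions $k_i \subseteq \ell_i$ and $k_i \subseteq k$ are trivial. Since $(f_i)_{\ell_i}$ is admissible over $\ell_i$, its unique indefinite real place $\tau_0$ lies over a single real place $v_0$ of $k_i$, and every real place of $\ell_i$ at which $(f_i)_{\ell_i}$ is definite lies over a real place of $k_i$ at which $f_i$ is definite (definiteness being invariant under base change to $\R$); as every real place of $k_i$ other than $v_0$ has all its extensions to $\ell_i$ distinct from $\tau_0$, it follows that $f_i$ is indefinite only at $v_0$, i.e.\ $f_i$ is itself admissible over $k_i$. Now, $\ell_i$ being totally real and containing $k_i$, there are exactly $[\ell_i:k_i]$ real places of $\ell_i$ lying over $v_0$, at each of which $(f_i)_{\ell_i}$ is indefinite; admissibility of $(f_i)_{\ell_i}$ then forces $[\ell_i:k_i]=1$, that is $\ell_i = k_i$. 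Running the same argument for $k_i \subseteq k$ — using that $(f_i)_k$ is admissible over the totally real field $k$ — gives $k = k_i$.

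It then remains to conclude: we obtain $\ell_1 = k_1 = k = k_2 = \ell_2 =: K$, and over $K$ we have $\aPO_{g_i} \cong \aPO_{f_i,\ell_i} = \aPO_{f_i,k} \cong \aPO_f$. Hence $W_1$ and $W_2$ are arithmetic with the same trace field $K$ and isomorphic ambient group, so they are commensurable (the trace field together with the ambient group is a complete commensurability invariant of arithmetic hyperbolic manifolds of this type; see \cite[§1.6]{GPS} and \cite[Th.~3]{Vinberg}), contradicting the hypothesis. I expect the only genuine work to be the place-counting of the third paragraph; everything else is a direct double application of Lemma~\ref{lem:trace-field-inclusion} together with the standard dictionary between arithmeticity and admissible quadratic forms over totally real fields. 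One mild point worth noting is that since $M$, $W_1$ and $W_2$ each contain a totally geodesic hypersurface, their ambient groups genuinely are of the form $\aPO$ of a quadratic form, so this dictionary applies without the dimension $3,7$ subtleties.
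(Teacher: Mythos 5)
Your proof is correct and follows essentially the same route as the paper's: apply Lemma~\ref{lem:trace-field-inclusion} to $M_1$ and $M_2$ as pieces of $M$, then use admissibility of the ambient group of an arithmetic manifold to force the trace fields to coincide and the ambient groups $\aPO_{f_1}$, $\aPO_{f_2}$ to be $k$-isomorphic, contradicting non-commensurability. The only difference is that where you re-derive, via the place-counting argument, the fact that a piece of an arithmetic manifold has the same trace field and ambient group as that manifold, the paper simply cites this as a known lemma.
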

\begin{proof}
  For $i = 1,2$, let $k_i$ denote the trace field of $M_i$ and let $\aPO_{f_i}$ denote its ambient group.
  Then $k_i$ (resp.\ $\aPO_{f_i}$) is the trace field (resp.\ the ambient group) of the arithmetic manifold 
  $M_i$ is a piece of, see \cite[Lem.~2.5]{Emery-Mila}.
  By Lemma~\ref{lem:trace-field-inclusion}, if $k$ denotes the trace field of $M$ then $k_1$ and $k_2$ are subfields of $k$ and 
  $\aPO_{f_1,k} \cong \aPO_{f_2,k}$.
  Now if $M$ is arithmetic, its ambient group is admissible.
  This forces $k = k_1 = k_2$ and $\aPO_{f_1} \cong \aPO_{f_2}$, and thus the corresponding arithmetic 
  manifolds are commensurable.
\end{proof}

 The next result controls how large the trace field gets under interbreedings and closing ups.
\begin{theorem} \label{th:gluing-quadratic}
  \begin{enumerate}
    \item Let $M_1$ and $M_2$ be manifolds with boundary, of trace fields $k_1$ and $k_2$ respectively. 
      Let $M$ be obtained by interbreeding $M_1$ with $M_2$.
      Then the trace field of $M$ is contained in a quadratic extension of 
      $k_1 k_2$, the composition of $k_1$ and $k_2$.  
    \item Let $M_1$ be a manifold with boundary of trace field $k_1$.
      Let $M$ be obtained by closing up $M_1$.
      Then the trace field of $M$ is contained in a quadratic extension of $k_1$.
  \end{enumerate}
\end{theorem}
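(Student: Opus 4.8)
By Theorem~\ref{th:tf-gluing-field-of-def-gluing-isom}, the trace field of $M$ equals the field of definition $k_\varphi$ of the gluing isometry $\varphi$, i.e.\ the field of definition of the algebraic isomorphism $\Phi\:(\aPO_{f_1,k})_\R \to (\aPO_{f_2,k})_\R$ obtained from any extension $\phi$ of $\varphi$, where $k = k_1k_2$ (and $k = k_1$ in the closing-up case). So the plan is to bound $[k_\varphi:k]$ by $2$ by analyzing the map $\phi$ concretely on the level of the ambient hyperboloid models. The key structural fact is that $\phi\:\Hy_{f_1}\to\Hy_{f_2}$ sends a fixed hyperplane lift $R_1$ to a fixed hyperplane lift $R_2$ and induces the \emph{fixed} isometry $\varphi\:N_1\to N_2$ between the boundary hypersurfaces; this severely constrains $\phi$ once one fixes a good pair of models.

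**Key steps.** First I would choose models $\tM_i\subset\Hy_{f_i}$ for $M_i$ adapted to the hyperplanes: write $f_i = g_i \perp a_i t^2$ where $g_i$ is a quadratic form of signature $(n-1,1)$ over $k_i$ cutting out $R_i$, and $a_i \in k_i$ is totally positive (this is possible because $R_i$ is a hyperplane lift of a boundary hypersurface, so $\Stab_{\Gamma_i}(R_i)$ is almost Zariski-dense in an $\aPO_{g_i}$, and by Lemma~\ref{lem:trace-field-inclusion}-type reasoning the relevant form is defined over $k_i$). Next, since $\varphi$ is an isometry $N_1\to N_2$ and both $N_i$ pull back to $\Hy_{g_i}$, the restriction $\phi|_{R_1}\:\Hy_{g_1}\to\Hy_{g_2}$ is an isometry inducing a \emph{fixed} isometry of the quotients; by the same almost-Zariski-density argument used throughout (e.g.\ as in the proof of Lemma~\ref{lem:field-of-def}), the induced isomorphism $\aPO_{g_1,\R}\to\aPO_{g_2,\R}$ is already defined over $k_1k_2 \subset k$. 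Thus the only ``new'' information in $\phi$ beyond something defined over $k$ is how $\phi$ acts in the direction normal to $R_1$: concretely, writing $\phi$ as induced by $A_\phi\in\aGL_{n+1}(\R)$ with $f_2\circ A_\phi = f_1$, one can arrange $A_\phi$ to be block-triangular with the $R_i$-block defined over $k$, and the normal component is a single scalar $\lambda$ with $\lambda^2 = a_1/a_2 \in k^\times$ (forced by $f_2\circ A_\phi = f_1$). Hence $\phi$ — and therefore $\Phi$ — is defined over $k(\lambda) = k(\sqrt{a_1/a_2})$, a quadratic (or trivial) extension of $k$. For the closing-up case one uses $M_2 = M_1$, $k_2 = k_1$, and the same computation gives $k_\varphi \subseteq k_1(\sqrt{a_1/a_1}) $-type extension — more precisely the normal scalar again satisfies a degree-$\leq 2$ equation over $k_1$, so $k_\varphi$ is contained in a quadratic extension of $k_1$. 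Finally, one invokes Theorem~\ref{th:tf-gluing-field-of-def-gluing-isom} to conclude that the trace field of $M$, being equal to $k_\varphi$, lies in a quadratic extension of $k_1k_2$ (resp.\ of $k_1$).

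**Main obstacle.** The delicate point is justifying the reduction ``everything except one normal scalar is defined over $k$.'' This requires (a) checking that one may genuinely choose the models $\tM_i$ so that the hyperplane $R_i$ is the coordinate hyperplane of a sub-form $g_i$ defined over $k_i$ — i.e.\ that the ambient group of $\Stab_{\Gamma_i}(R_i)$ sits inside that of $\Gamma_i$ over $k_i$, which is essentially Lemma~\ref{lem:trace-field-inclusion} applied to the piece $N_i$ — and (b) verifying that the block-triangular normal form for $A_\phi$ can be achieved after composing $\phi$ with the reflection $\rho_{R_i}$ and elements of $\Stab_{\Gamma_i}(R_i)$, none of which change $k_\varphi$ (by the arguments already used in the proof of Lemma~\ref{lem:field-of-def}, since reflections about hyperplane lifts of boundary hypersurfaces are defined over the trace field by \cite[Lem.~2.1]{Emery-Mila}, and stabilizer elements are defined over $k_i$). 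Once these bookkeeping points are in place, the bound $\lambda^2 = a_1/a_2$ is an immediate consequence of the norm condition $f_2\circ A_\phi = f_1$, and the quadratic bound follows.
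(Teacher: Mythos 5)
Your proposal is correct and follows essentially the same route as the paper: adapt the models so that $f_i = q_i \perp a_i x_n^2$ with $R_i$ the coordinate hyperplane, use the fact that the fixed isometry $\varphi$ forces the hyperplane block of the extension to be defined over $k=k_1k_2$ (via almost Zariski-density), and then observe that the normal direction contributes only a scalar $\lambda$ with $\lambda^2 = a_1/a_2 \in k^\times$, so the field of definition — hence by Theorem~\ref{th:tf-gluing-field-of-def-gluing-isom} the trace field — lies in $k(\sqrt{a_1 a_2})$. The bookkeeping points you flag (normalizing $q_2$ to $q_1$ via a $k$-similarity and the block form of $A_\phi$) are exactly the steps the paper carries out.
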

\begin{proof}  
We treat both cases at once by setting $M_2 = M_1$ in the closing up case.
Let $k = k_1k_2$ (resp.\ $k_1$) in the interbreeding (resp.\ closing up) case.
  Let $\tM_i \subset \Hy_{f_i}$ be a model for $M_i$, and let $N_i \subset M_i$ denote the gluing hypersurface.
  Up to choosing a different orthogonal basis for $f_i$, we may assume that $f_i = q_i + a_i x_n^2$ for some quadratic form $q_i$ and that the hyperplane 
  $R_i \subset \Hy_{f_i}$ corresponding to $\{x_n = 0\}$ is a hyperplane lift of $N_i \subset M_i$.
  The isometry $\varphi\: N_1 \to N_2$ is induced by a matrix $A_0 \in \aGL_n(k)$ such that
  $q_2 \circ A_0 = \lambda q_1$, for some $\lambda \in k$.
  Thus up to replacing $f_2$ with $\lambda^{-1}(f_2 \circ A) = q_1 + \lambda^{-1} a_2 x_n^2$, where 
  \[
    A = 
    \begin{pmatrix}
      A_0 & 0 \\
      0 & 1
    \end{pmatrix}
  \]
  and conjugating the monodromy representations accordingly, we can assume that $f_2 = q_1 + a_2 x_n^2$.
  Now by construction, using these models, the matrix 
  \[
    B = \begin{pmatrix}
      I & 0 \\
      0 & \sqrt{a_1 / a_2}.
    \end{pmatrix}
  \]  
  is such that $f_2 \circ B = f_1$, and thus induces an isometry $\phi\:\Hy_{f_1} \to \Hy_{f_2}$ which is an extension of the original gluing isometry $\varphi$.
  Therefore the trace field of $M$ is contained in $k(\sqrt{a_1 / a_2}) = k(\sqrt{a_1 a_2})$, as desired.
\end{proof}

Inductively, we find:
\begin{corollary}
  Let $M$ be a gluing of manifolds with boundary, all of which have trace field $k$.
  Then the trace field of $M$ is a multiquadratic extension of $k$ of degree at most $2^r$, 
  where $r$ is the number of gluing operations (interbreeding or closing up).
\end{corollary}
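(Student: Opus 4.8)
The plan is to argue by induction on $r$, the number of interbreedings and closing ups used to build $M$ out of the manifolds in $\calM$; equivalently, on the size of a construction tree witnessing $M \in \Mgl$. Along the way I will use two elementary facts about multiquadratic extensions of a fixed field $k$: (a) if $k_1, k_2$ are multiquadratic over $k$ then so is their composition $k_1 k_2 = k(\sqrt{b_i}, \sqrt{c_j})$, and $[k_1 k_2 : k] \le [k_1:k]\,[k_2:k]$; and (b) any quadratic extension of a multiquadratic extension of $k$ is again multiquadratic over $k$ (adjoin one more square root). Both are immediate.

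For the base case $r=0$, the manifold $M$ lies in $\calM$, so its trace field is $k$, a multiquadratic extension of $k$ of degree $1 = 2^0$. For the inductive step, I would write $M$ as the result of one final gluing operation applied to manifolds already constructed with fewer operations, and invoke Theorem~\ref{th:gluing-quadratic}. In the interbreeding case, $M = M_1 \cup_\varphi M_2$ with $M_1$ built from $r_1$ operations and $M_2$ from $r_2$ operations, so $r = r_1 + r_2 + 1$; by induction the trace fields $k_1, k_2$ of $M_1, M_2$ are multiquadratic over $k$ of degrees at most $2^{r_1}$ and $2^{r_2}$. Theorem~\ref{th:gluing-quadratic}(1) puts the trace field of $M$ inside a quadratic extension of $k_1 k_2$, which by (a) and (b) is multiquadratic over $k$ of degree at most $2\cdot 2^{r_1} \cdot 2^{r_2} = 2^r$. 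In the closing up case, $M = M_1/\!\sim_\varphi$ with $M_1$ built from $r_1 = r-1$ operations and trace field $k_1$ multiquadratic of degree at most $2^{r-1}$; Theorem~\ref{th:gluing-quadratic}(2) places the trace field of $M$ in a quadratic extension of $k_1$, hence by (b) it is multiquadratic over $k$ of degree at most $2^r$. This closes the induction.

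I do not expect a genuine obstacle here: the real content is entirely carried by Theorem~\ref{th:gluing-quadratic}, and what remains is purely bookkeeping — checking the two closure properties (a), (b) of multiquadratic extensions and that the count $r$ is additive over the two branches of an interbreeding and increases by one at a closing up. The only point that deserves a sentence of care is that every \emph{intermediate} object in a valid construction of $M$ still has nonempty totally geodesic boundary (so that Theorem~\ref{th:gluing-quadratic} applies at each step), which is automatic since an interbreeding or closing up can only be performed on a manifold with boundary.
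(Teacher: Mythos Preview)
Your inductive scheme matches the paper's, but claim (b) is false, and this is where the argument breaks. A quadratic extension of a multiquadratic extension of $k$ need not be multiquadratic over $k$: take $k = \Q$, the multiquadratic extension $\Q(\sqrt{2})$, and $L = \Q(\sqrt{2})\bigl(\sqrt{1+\sqrt 2}\,\bigr)$. Then $[L:\Q] = 4$, yet $L$ is not even Galois over $\Q$ (the minimal polynomial $x^4 - 2x^2 - 1$ of $\sqrt{1+\sqrt 2}$ has two non-real roots while $L \subset \R$), so $L$ is certainly not of the form $\Q(\sqrt{a},\sqrt{b})$. Theorem~\ref{th:gluing-quadratic} only tells you that the trace field of $M$ lies in $(k_1 k_2)(\sqrt{c})$ for some $c \in k_1 k_2$; nothing in your argument rules out $c$ being an element like $1+\sqrt 2$, so the induction as written does not close.

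The repair is to show that $c$ can always be chosen in the base field $k$, not merely in $k_1 k_2$. This is true but not automatic. Every boundary hypersurface $N_i$ of an intermediate gluing $M_i$ is still a boundary hypersurface of one of the original pieces $P$ (which has trace field $k$); using Lemma~\ref{lem:trace-field-inclusion} one may take the form $f_i$ for $M_i$ to be defined over $k$, and one may choose the hyperplane lift $R_i$ so that $\Stab_{\Gamma_i}(R_i) = \Stab_{\Gamma_P}(R_i) \subset \aPO_{f_i}(k)$. Rerunning the proof of Theorem~\ref{th:gluing-quadratic} with these choices forces the coefficients $a_1, a_2$ and the similarity factor $\lambda$ all to lie in $k$, whence the adjoined square root is $\sqrt{e}$ with $e \in k$ and the induction closes. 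The paper's one-word ``Inductively'' elides exactly this point; your explicit write-up makes the gap visible.
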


\begin{remark}
  This gives another proof of Theorem~2.7 in \cite{Emery-Mila}. 
  If the pieces are fundamental pieces with canonical gluing isometries, 
  Theorem~\ref{th:tf-cannonical-arithmetic-gluings} even permits an exact computation of the trace field.
\end{remark}

The next result shows that doubling does not change the trace field.
\begin{proposition}\label{prop:double-same-trace-field}
  Let $M_1$ be a manifold with boundary, and let $M$ be its double.
  Then the trace field of $M$ equals that of $M_1$.
\end{proposition}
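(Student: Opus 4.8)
The plan is to write down a single model for the double in which the monodromy group obviously lies in $\aPO_{f_1}(k_1)$, where $k_1$ is the trace field of $M_1$; this gives $K\subseteq k_1$ for the trace field $K$ of $M$, and the opposite inclusion is free because $\Gamma_1$ sits inside the deck group of $M$ (equivalently, $M_1$ is a piece of $M$, so Lemma~\ref{lem:trace-field-inclusion} applies).

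Concretely, I would fix a model $M_1=\Gamma_1\bs\tM_1$, $\tM_1\subset\Hy_{f_1}$, $\Gamma_1\subset\aPO_{f_1}(k_1)$, pick a hyperplane lift $R_i\subset\bd\tM_1$ of each boundary component $N_i$ of $M_1$ ($1\le i\le r$), and let $\rho_i\in\aPO_{f_1}$ be the reflection about $R_i$. The classical description of the double then says: reflecting the convex polyhedron $\tM_1$ repeatedly across its boundary hyperplanes tiles $\Hy_{f_1}$ by copies of $\tM_1$, the group $W:=\la\Gamma_1,\rho_1,\dots,\rho_r\ra$ permutes the tiles, the parity of the number of reflection letters is a well-defined homomorphism $W\to\Z/2\Z$ (it is the $2$-coloring of the tiling), and its kernel $\Gamma$ — a torsion-free group with $\Gamma_1\subseteq\Gamma\subseteq W$ — satisfies $\Gamma\bs\Hy_{f_1}=M$.

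Now by \cite[Lem.~2.1]{Emery-Mila} each $\rho_i$ is $k_1$-rational, i.e.\ $\rho_i\in\aPO_{f_1}(k_1)$; together with $\Gamma_1\subset\aPO_{f_1}(k_1)$ this gives $W\subseteq\aPO_{f_1}(k_1)$, hence $\Gamma\subseteq\aPO_{f_1}(k_1)$, hence $\tr\Ad\gamma\in k_1$ for all $\gamma\in\Gamma$ and $K\subseteq k_1$. Since $\Gamma_1\subseteq\Gamma$ we also have $k_1=\Q(\tr\Ad\gamma\mid\gamma\in\Gamma_1)\subseteq\Q(\tr\Ad\gamma\mid\gamma\in\Gamma)=K$, so $K=k_1$.

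The one point requiring care is the geometric claim in the middle paragraph — that $W$ is discrete and that its even part $\Gamma$, rather than some larger group, is the deck group of the double. I expect this to be the main (though standard) obstacle, and I would handle it by appealing to the Poincaré-polyhedron construction of the double: discreteness of $W$ follows from that of $\Gamma$, which is part of the hyperbolic structure on gluings invoked right after Definition~\ref{def:gluings}, and the identification $\Gamma\bs\Hy_{f_1}=M$ is the usual checkerboard argument. A way to avoid this altogether is to build $M$ from $M_1$ by one interbreeding along $N_1$ and then closing-ups along $N_2,\dots,N_r$, all with the identity gluing isometry, and invoke Theorem~\ref{th:tf-gluing-field-of-def-gluing-isom}: at each step a reflection about a hyperplane lift of the hypersurface being glued is an extension of the identity isometry, and it is $k_1$-rational by \cite[Lem.~2.1]{Emery-Mila}, so every gluing has field of definition $k_1$ and the trace field never leaves $k_1$.
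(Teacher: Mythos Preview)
Your proposal is correct, and in fact your \emph{second} route is exactly the paper's proof: realize the double as one interbreeding of $M_1$ with a copy of itself followed by $r-1$ closing-ups, at each step choose models so that the two hyperplane lifts coincide, and observe that a $k_1$-rational map (the paper takes the identity $\Hy_f\to\Hy_f$, you take a reflection---both work) is then an extension of the gluing isometry, so Theorem~\ref{th:tf-gluing-field-of-def-gluing-isom} keeps the trace field equal to $k_1$ throughout.

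Your \emph{first} route, via the reflection group $W=\langle\Gamma_1,\rho_1,\dots,\rho_r\rangle$, is a genuinely different and more self-contained argument: it bypasses Theorem~\ref{th:tf-gluing-field-of-def-gluing-isom} altogether by writing down an explicit $k_1$-rational model for the monodromy of the double. What it buys is independence from the main machinery of the paper; what it costs is the geometric identification of the even subgroup of $W$ with $\pi_1(M)$, which (as you correctly flag) is the standard description of the double as the orientation cover of the mirror orbifold $M_1^{\mathrm{mir}}$, or equivalently the Poincar\'e-polyhedron/checkerboard picture. The paper's version is shorter precisely because Theorem~\ref{th:tf-gluing-field-of-def-gluing-isom} has already absorbed that geometric work.
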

\begin{proof}
  The double of $M_1$ can be realized by interbreeding first $M_1$ with a copy of itself, and then 
  closing up the remaining boundary components.
  At each step, we choose the same model in $\Hy_f$ for the two pieces in the interbreeding case 
  (resp.\ one piece in the closing up case) with the same hyperplane lift for the two 
  hypersurfaces that are to be glued together.
  It is then clear that the identity $\Hy_f \to \Hy_f$ is a valid extension of the gluing isometry.
  Thus the trace field remains untouched.
\end{proof}
Since the manifolds constructed in \cite{Agol,BT} are doubles, we get:
\begin{corollary}
  Let $M$ be a manifold of Agol-Belolipetsky-Thomson \cite{Agol,BT}.
  Then its trace field equals the trace field of the arithmetic manifold used in its construction 
  (i.e., it is quasi-arithmetic in the sense of \cite{Vinberg-non-arith-1}).
\end{corollary}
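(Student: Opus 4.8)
The plan is to combine Proposition~\ref{prop:double-same-trace-field} with the observation, implicit in its proof, that doubling leaves not merely the trace field but the whole ambient group unchanged. I would first recall that each manifold $M$ of Agol--Belolipetsky--Thomson is obtained from an arithmetic manifold $M_0$ (with totally geodesic boundary, after cutting along an embedded totally geodesic hypersurface) by a doubling operation, possibly iterated. Applying Proposition~\ref{prop:double-same-trace-field} once per doubling step and inducting on the number of steps then gives at once that the trace field of $M$ equals the trace field $k$ of $M_0$, proving the first assertion.

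For the parenthetical claim --- that $M$ is quasi-arithmetic --- I would revisit the proof of Proposition~\ref{prop:double-same-trace-field}. There the double is realized as an interbreeding of $M_1$ with a copy of itself followed by a closing up, and at each step one chooses the same model $\tM \subset \Hy_f$ for both pieces and the same hyperplane lift, so that the \emph{identity} $\Hy_f \to \Hy_f$ is a valid extension of the gluing isometry. Consequently the isomorphism $\Phi$ that this extension induces on the ambient groups is the identity, and by Theorem~\ref{th:tf-gluing-field-of-def-gluing-isom} the ambient group of the double is the very same $\aPO_f$ as that of the piece(s) being doubled. Iterating, the ambient group of $M$ is $\aPO_f$, where $f$ is the admissible quadratic form over $k$ defining the arithmetic manifold $M_0$ (admissibility being built into the definition of arithmeticity, cf.\ Section~\ref{sec:classical-gluings}).

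It then follows that $M = \Gamma \bs \tM$ with $\Gamma \subset \aPO_f(k)$ for an admissible form $f$ over the trace field $k$ of $M$, which is exactly the condition for $\Gamma$ to be quasi-arithmetic in the sense of \cite{Vinberg-non-arith-1}. The only step that is not pure bookkeeping is the claim that doubling preserves the ambient group; but this is immediate from the identity-extension argument above, so I expect no real difficulty. The mild point to be careful about is simply citing the structure of the Agol--Belolipetsky--Thomson examples as iterated doubles of arithmetic pieces accurately.
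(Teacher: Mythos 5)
Your proposal is correct and follows the paper's own (one-line) argument: the Agol--Belolipetsky--Thomson manifolds are doubles of arithmetic pieces, so Proposition~\ref{prop:double-same-trace-field} gives the statement immediately, with the quasi-arithmeticity following since the ambient group is likewise preserved (which in the paper is already guaranteed by Lemma~\ref{lem:trace-field-inclusion} once the trace field is unchanged). The extra detail you supply about the identity extension preserving the ambient group is consistent with, and implicit in, the paper's reasoning.
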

\begin{remark}
  This was already proved in \cite{Thomson}.
  In order to investigate the commensurability of those manifolds, one can use a finer invariant 
  called the \emph{adjoint trace ring}, see \cite{Mila18}.
\end{remark}

The final result of this section shows that closing up does not change the trace field when the dimension is \emph{odd}.
\begin{proposition} \label{prop:odd-dimensional-closing-up}
  Let $M_1$ be an \emph{odd dimensional} manifold with boundary.
  Let $M$ be obtained by closing up $M_1$ at any two isometric boundary components.
  Then the trace field of $M$ equals that of $M_1$.
\end{proposition}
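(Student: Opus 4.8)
The plan is to carry the proof of Theorem~\ref{th:gluing-quadratic}(2) one step further, combining it with a discriminant–parity argument in the spirit of the proof of Theorem~\ref{th:realization-trace-fields}. Write $M_2 = M_1$ for the closing up, let $N_1, N_2 \subset \bd M_1$ be the two isometric boundary components, and let $\varphi\colon N_1 \to N_2$ be the gluing isometry. First I would reproduce verbatim the reduction used in the proof of Theorem~\ref{th:gluing-quadratic}(2): choose models $\tM_i \subset \Hy_{f_i}$ for $M_i$ and orthogonal bases so that $f_1 = q_1 + a_1 x_n^2$ and $f_2 = q_1 + a_2 x_n^2$ for a common quadratic form $q_1$ over $k_1$ and some $a_1, a_2 \in k_1^\times$, with $R_i = \{x_n = 0\} \subset \Hy_{f_i}$ a hyperplane lift of $N_i$; then $B = \mathrm{diag}(I, \sqrt{a_1/a_2})$ satisfies $f_2 \circ B = f_1$ and induces an extension $\phi$ of $\varphi$, so that the trace field $K$ of $M$ is contained in $k_1(\sqrt{a_1 a_2})$.

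The new input is that $f_1$ and $f_2$ are two ambient forms of the \emph{same} manifold $M_1$, hence are similar over $k_1$: there is $\lambda \in k_1^\times$ with $f_1 \cong \lambda f_2$ (this is the uniqueness of the ambient group up to $k_1$-isomorphism together with the fact that $\aPO$ of a quadratic form determines it up to similarity, exactly as used in the proof of Proposition~\ref{prop:gluing-containing-noncommensurable-nonarith}; see also \cite{LiMillson}). Computing discriminants modulo $(k_1^\times)^2$, one has $\operatorname{disc}(f_i) \equiv a_i\,\operatorname{disc}(q_1)$, so $f_1 \cong \lambda f_2$ forces $a_1 \equiv \lambda^{\,n+1} a_2 \pmod{(k_1^\times)^2}$. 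Since $n$ is odd, $n+1$ is even, so $\lambda^{\,n+1} \in (k_1^\times)^2$ and therefore $a_1 a_2 \in (k_1^\times)^2$, i.e.\ $\sqrt{a_1/a_2} \in k_1^\times$. Hence $B \in \aGL_{n+1}(k_1)$, the induced isomorphism of ambient groups is defined over $k_1$, and so the extension $\phi$ --- hence the gluing isometry $\varphi$ --- has field of definition contained in $k_1$. By Theorem~\ref{th:tf-gluing-field-of-def-gluing-isom} this gives $K \subseteq k_1$; since $M_1$ is a piece of $M$, Lemma~\ref{lem:trace-field-inclusion} gives the reverse inclusion $k_1 \subseteq K$, whence $K = k_1$.

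The one step to be careful about is the assertion that two ambient forms of $M_1$ are similar over its trace field $k_1$; this rests on the standard fact that $\aPO_g \cong_{k_1} \aPO_{g'}$ implies $g \sim g'$ for quadratic forms of rank $\geq 3$, which I would invoke as above rather than reprove. Everything else is just the discriminant bookkeeping, and the role of the hypothesis is transparent: oddness of $n$ is used only to make $n+1$ even, which kills the similarity factor $\lambda$ modulo squares --- for even $n$ this step fails, consistently with the nonarithmetic twists produced in Theorem~\ref{th:intro-twist-nonarithmetic}.
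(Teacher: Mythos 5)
Your proof is correct, but it takes a genuinely different route from the paper's. The paper argues directly on an arbitrary extension $\phi$ with $\phi(R_1)=R_2$, entirely inside one model: for any $k_1$-automorphism $\sigma$ of $\C$, the induced isomorphisms $\Phi$ and ${}^{\sigma}\Phi$ agree on the Zariski-dense subgroup $\Stab_\Gamma(R_1)$, so $\phi^{-1}\sigma(\phi)$ centralizes its Zariski closure and therefore $\sigma(\phi)\in\{\phi,\phi\rho\}$, where $\rho$ is the reflection in $R_1$; oddness of $n$ then enters through the fact that $\aPO_f$ has two connected components, with $\aPO_f^{\circ}$ (defined over $k_1$, hence $\sigma$-stable) consisting of the orientation-preserving isometries, which forces $\sigma(\phi)=\phi$ and hence $\phi\in\aPO_f(k_1)$. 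You instead push the normalization from the proof of Theorem~\ref{th:gluing-quadratic} one step further and kill $\sqrt{a_1a_2}$ by a discriminant computation; this is precisely the mechanism the paper uses for the \emph{interbreeding} half of Theorem~\ref{th:arith-gluing-odd-dim}, so your argument in effect treats both halves of that theorem uniformly. The two proofs encode the parity of $n+1$ in equivalent ways: even rank makes $\aPO_f$ disconnected with $\rho\notin\aPO_f^{\circ}$ (paper), and makes the similarity factor $\lambda^{n+1}$ a square (you). The one step you rightly flag --- that two ambient forms of $M_1$ are $k_1$-similar --- deserves a slightly better justification than the blanket statement ``$\aPO_g\cong_{k_1}\aPO_{g'}$ implies $g\sim g'$'' (which in full generality has caveats in low rank and for triality in rank $8$): in the present situation the $k_1$-defined isomorphism of ambient groups is induced by conjugation by a real matrix $P$ with $f_2\circ P=f_1$ (coming from an isometry between the two models), Galois descent shows that its image in $\aGL_{n+1}/\mathbb{G}_m$ is a $k_1$-point, and since $H^1(k_1,\mathbb{G}_m)=0$ one may write $P=cP_0$ with $P_0\in\aGL_{n+1}(k_1)$, whence $f_2\circ P_0=c^{-2}f_1$ with $c^{-2}\in k_1^{\times}$. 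With that supplied, your argument is complete.
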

\begin{proof}
  Let $\tM \subset \Hy_{f}$ be a model for $M_1$ with corresponding monodromy representation 
  $\Gamma \subset \aPO_f(k)$, where $k$ is the trace field of $M_1$.
  Let $N_1, N_2$ be two isometric boundary components of $M_1$ with hyperplane lifts $R_1, R_2 \subset 
  \bd \tM$
  and let $\varphi\: N_1 \to N_2$ be a gluing isometry.
  Let $\phi \in \aPO_f(\R)$ be an extension of $\varphi$ such that $\phi(R_1) = R_2$.
  By Theorem~\ref{th:tf-gluing-field-of-def-gluing-isom}, it is enough to show that $\phi \in \aPO_f(k)$. 
  
  Let $\Phi\: \aPO_{f,\R} \to \aPO_{f,\R}$ denote the isomorphism induced via conjugation by $\phi$.
  Let $\sigma$ be a $k$-automorphism of $\C$. 
  Then ${}^\sigma \Phi$ is induced via conjugation by $\sigma(\phi)$.
  Let $\G_0$ denote the Zariski-closure of $\Stab_\Gamma(R_1)$ in $\aPO_f$.
  As $\Stab_\Gamma(R_1) \subset \aPO_f(k)$, the maps $\Phi$ and ${}^\sigma \Phi$ must agree on it.
  Therefore, $\Phi^{-1} \circ {}^\sigma \Phi$ is the identity on $\Stab_\Gamma(R_1)$, and thus also on 
  $\G_0$ by Zariski-density.
  It follows that $\phi^{-1} \sigma(\phi)$ commutes with every element of $\G_0$.
  Since $\aPO_f$ is adjoint, this implies $\sigma(\phi) = \phi$ or $\sigma(\phi) = \phi\rho$ 
  (where $\rho$ is the reflection at $R_1$).
  Moreover, since $\rho \in \aPO_f(k)$ by Lemma~2.1 of \cite{Emery-Mila}, we have $\sigma(\phi \rho) = \sigma(\phi) \rho$ and thus 
  $\sigma$ acts on the set $\{\phi, \phi\rho\}$.

  Since $n$ is odd, the algebraic group $\aPO_f$ has two connected components, and 
  $\aPO_f^\circ(\R)$ consists of the orientation-preserving isometries of $\Hy_f$.
  It follows that the set $\aPO_f^\circ \cap \{\phi, \phi \rho\}$ contains exactly one element.
  Now as $\aPO_f^\circ$ is defined over $k$, it is stable under $\sigma$.
  Thus $\sigma$ fixes one, and hence both elements of $\{\phi, \phi\rho\}$,
  i.e., $\sigma(\phi) = \phi$.
  Since $\sigma$ was arbitrary, we get $\phi \in \aPO_f(k)$, as desired.
\end{proof}

\section{Examples and applications} \label{sec:examples}

Let $M$ be a manifold containing a non-separating hypersurface $N$.
Recall that a \emph{twist} of $M$ (at $N$) is any manifold obtained by closing up the piece $M_1$ of $M$ defined as the completion of
$M-N$.

The goal of this section is twofold.
First we produce examples where twisting arithmetic manifolds increases the trace field, as promised in 
Theorem~\ref{th:intro-twist-nonarithmetic}.
In particular, these twists will all be nonarithmetic.
Second, we will use the results from the preceding sections to show that the Coxeter simplex $\Delta_5$ defined in the 
introduction is not commensurable to any arithmetic gluing, thereby proving Theorem~\ref{th:intro-Delta-5}.

The tool we use to construct twists is the following:

\begin{lemma} \label{lem:closing-up-increase-trace-field}
  Let $f_0(x_0, \dots, x_{n-1})$ be an admissible quadratic form over a totally real number field $k \subset \R$, 
  and let $a \in k^\times - (k^\times)^2$ be totally positive such that $f_0 \cong a f_0$.
  Assume there exists a matrix $A_0 \in \aGL_n(k)$ such that 
  \begin{enumerate}
    \item $f_0 \circ A_0 = a f_0$,
    \item $\frac{1}{a} A_0^2 \in \aO_{f_0}(\calO_k)$.
  \end{enumerate}
  Then there exists a twist $M$ of the arithmetic manifold corresponding to $f = f_0 + x_n^2$ having trace field $k(\sqrt{a})$.
  In particular, $M$ is nonarithmetic.
\end{lemma}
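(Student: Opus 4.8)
The plan is to exhibit an explicit twist by constructing, on a suitable model for the arithmetic manifold associated to $f = f_0 + x_n^2$, a non-separating hypersurface together with a gluing isometry whose field of definition is exactly $k(\sqrt a)$; then Theorem~\ref{th:tf-gluing-field-of-def-gluing-isom} gives the trace field. Set $\Gamma_0 = $ image of $\aO_{f_0}(\calO_k)$ in $\aPO_{f_0}$ and $\Gamma = $ image of $\aO_f(\calO_k)$ in $\aPO_f$, with $\Hy_{f_0} \inj \Hy_f$ as $R = \{x_n = 0\}$, as in the setup of Section~\ref{sec:classical-gluings}. After passing to a torsion-free finite-index normal subgroup $\Lambda \triangleleft \Gamma$ with $\Lambda \bs \Hy_f$ orientable and $R$ projecting to an embedded hypersurface $N'$ (Selberg, and \cite[Lem.~3.1]{BT}), the arithmetic manifold $\Lambda \bs \Hy_f$ contains $N'$; I will arrange things so that $N'$ is non-separating by first, if necessary, closing up along a canonical isometry to create a non-separating copy — but more simply, one takes the piece $M_1$ to be the completion of $(\Lambda \bs \Hy_f) - N'$ and builds the twist on $M_1$ directly.

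The key point is hypothesis (1)--(2). The matrix $A_0 \in \aGL_n(k)$ with $f_0 \circ A_0 = a f_0$ induces an isometry $\Hy_{f_0} \to \Hy_{a f_0} = \Hy_{f_0}$ (after rescaling, since $\Hy_{af_0}\cong\Hy_{f_0}$), hence a nontrivial self-isometry of the hyperbolic space underlying $N' \cong \Gamma_0' \bs \Hy_{f_0}$; because $\frac1a A_0^2 \in \aO_{f_0}(\calO_k)$ lies in a congruence subgroup after adjusting $\Lambda$, this self-map descends to an isometry $\varphi\colon N' \to N'$ (first at the level of $\Gamma_0'$, using that $A_0$ normalizes the relevant arithmetic group up to the square condition). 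Extend $\varphi$ to $\phi\colon \Hy_f \to \Hy_f$ by letting $\phi$ act on $R^\perp = \{x_0 = \cdots = x_{n-1} = 0\}$ as the identity; concretely $\phi$ is induced by $\mathrm{diag}(A_0, 1)$ composed with the scaling $\mathrm{diag}(I, \sqrt a)$ on the ambient $\R^{n+1}$, so that $f \circ (\text{this matrix}) = a f_0 \cdot \tfrac1a + x_n^2$... — i.e. one checks that the resulting element of $\aGL_{n+1}(\R)$ conjugates $f$ to itself and sends $R$ to $R$. The field of definition of the induced $\Phi$ on $\aPO_{f,\R}$ is then $k(\sqrt a)$: it contains $k$ trivially, it contains $\sqrt a$ because $A_0^{\phantom 2}$ has $f_0\circ A_0 = a f_0$ forcing a $\sqrt a$ factor in the scaling, and it is contained in $k(\sqrt a)$ because the matrix has all entries in $k(\sqrt a)$. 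Since $a \notin (k^\times)^2$ this extension is quadratic and nontrivial.

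The main obstacle is verifying that $\phi$ genuinely descends to the $\Lambda$-quotient as a twist, i.e. that $\phi \Stab_\Lambda(R) \phi^{-1} = \Stab_\Lambda(R)$ and that $\phi$ induces the chosen isometry $N' \to N'$ — this is exactly where condition (2), $\frac1a A_0^2 \in \aO_{f_0}(\calO_k)$, is needed, since $\phi^2$ must lie in the deck group for the quotient construction to close up consistently, and one must pass to a deep enough congruence subgroup (common level for $\Lambda$ and the group generated by $A_0$'s action) so that all compatibility conditions of the closing-up in Definition~\ref{def:gluings}(2) hold. Once $M$ is produced as this closing up, Theorem~\ref{th:tf-gluing-field-of-def-gluing-isom} identifies its trace field with the field of definition of $\varphi$, which is $k(\sqrt a)$; and since the trace field $k(\sqrt a) \neq k$ is not the field of an admissible form in the way required for arithmeticity (an arithmetic manifold containing a hypersurface has ambient group $\aPO_{f_0,K}$ over its own trace field, and here $\aPO_{f_0}$ is defined over $k \subsetneq k(\sqrt a)$, contradicting minimality unless the manifold were commensurable to the original), $M$ is nonarithmetic. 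One can also argue nonarithmeticity directly: an arithmetic manifold has trace field equal to the defining field of an admissible form, but twisting preserves volume and commensurability class only if $\phi \in \aPO_f(k)$, which fails here by the degree computation.
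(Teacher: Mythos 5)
Your overall strategy is the same as the paper's: turn $A_0$ into a self-isometry of a totally geodesic hypersurface, extend it by $\mathrm{diag}(\tfrac{1}{\sqrt a}A_0,1)$, and invoke Theorem~\ref{th:tf-gluing-field-of-def-gluing-isom}. But there is a genuine gap in the step where you ``descend'' $g=\tfrac{1}{\sqrt a}A_0$ to an isometry $\varphi$ of an embedded hypersurface. First, $g \notin \aO_{f_0}(k)$, so conjugation by $g$ has no reason to preserve $\Stab_\Lambda(R)$ or any congruence subgroup; ``passing to a deep enough congruence subgroup of common level'' does not produce a $g$-invariant lattice. This is exactly where hypothesis (2) must be used precisely: since $g^2$ lies in the image of $\aO_{f_0}(\calO_k)$, some power $g^m$ lies in $\Lambda$, and one replaces $\Lambda$ by the finite-index subgroup $\Lambda'=\Lambda\cap g\Lambda g^{-1}\cap\cdots\cap g^{m-1}\Lambda g^{-m+1}$, which is normalized by $g$. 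Second --- and this is the step you omit entirely --- after shrinking to $\Lambda'$ the hypersurface $\Lambda'\bs\Hy_{f_0}$ is only \emph{immersed} in the ambient arithmetic manifold; to realize it as an embedded (non-separating) hypersurface one must pass to a further finite cover $\Gamma'\bs\Hy_f$, which requires subgroup separability of the geometrically finite group $\Lambda'$ in $\Gamma$ (the paper cites \cite[Cor.~1.12]{BHW}). Without this, the closing up you perform is not along a hypersurface of an arithmetic manifold, and the conclusion that $M$ is a twist of such a manifold is not established.

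Two smaller points. Your justification that the field of definition is strictly larger than $k$ (``forcing a $\sqrt a$ factor in the scaling'') is not an argument: a matrix with entries outside $k$ can perfectly well define a $k$-point of $\aPO_f$. The clean argument is Galois-theoretic: for the nontrivial $k$-automorphism $\sigma$ of $k(\sqrt a)$ one computes $\sigma(\phi)=\phi\rho$ with $\rho$ the reflection in $R$, so $\phi\notin\aPO_f(k)$; combined with the obvious inclusion of the field of definition into $k(\sqrt a)$ this pins it down. Similarly, nonarithmeticity should be deduced from the fact that the ambient group $\aPO_{f,k(\sqrt a)}$ is not admissible (the conjugate of $f$ under $\sigma$ is $f$ itself, of signature $(n,1)$, not positive definite), rather than from the vague appeals to ``minimality'' and volume in your last sentences.
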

\begin{proof}
  Let $\Gamma \bs \Hy_{f}$ be a fundamental arithmetic manifold constructed using $f$ (see 
  Section~\ref{sec:classical-gluings}).
  Let $R = \{x_n = 0 \} \subset \Hy_f$; by definition, there is an arithmetic manifold 
  $\Lambda \bs \Hy_{f_0}$ such that the map $\Lambda \bs \Hy_{f_0} \to \Gamma \bs \Hy_f$ 
  is an isometric embedding, and we can assume that its image is non-separating.
  
  Let $g$ denote the image of $\frac{1}{\sqrt{a}}A_0$ in $\aPO_{f_0}$.
  Since $g^2$ is in the image of $\aO_{f_0}(\calO_k)$, we have $g^m \in \Lambda$ for a suitable power $m$, 
  and thus the group $\Lambda' = \Lambda \cap g \Lambda g^{-1} \cap \cdots \cap g^{m-1} \Lambda g^{-m+1}$
  is a finite index subgroup of $\Lambda$ that is normalized by $g$.
  Hence $g$ induces an isometry $\varphi$ of $N' = \Lambda' \bs \Hy_{f_0}$.
  Since $\Lambda'$ is geometrically finite, it is separable in $\Gamma$ by \cite[Cor.~1.12]{BHW} and 
  thus there exists a finite index subgroup $\Gamma' \subset \Gamma$ such that the map 
  $\Lambda' \bs \Hy_{f_0} \to \Gamma' \bs \Hy_f$ is an isometric embedding.
  Call $N$ its image.

  Let $M_1$ be the piece obtained as the completion of $\Gamma' \bs \Hy_{f} - N$.
  Let $p\:\Hy_f \surj \Gamma' \bs \Hy_f$ be the covering map and let $\tM_1, \tM_1' \subset \Hy_f$ 
  be the two connected components of $p^{-1}(M_1)$ containing $R$ in their boundary.
  Then $\tM_1$ and $\tM_1'$ are two models for $M_1$ and if $N_1, N_1'$ are the boundary components of $M_1$, 
  we have that $R$ is a hyperplane lift of $N_1$ in $\tM_1$ and of $N_1'$ in $\tM_1'$ (or vice versa).

  Let $M$ be obtained via closing up $M_1$ using the isometry $\varphi$.
  It is clear that the isometry $\phi \in \aPO_f(k(\sqrt{a}))$ defined by the matrix
  \[
    \begin{pmatrix}
      \frac{1}{\sqrt{a}} A_0 & 0 \\
      0 & 1
    \end{pmatrix}
  \]
  is an extension of $\varphi$.
  If $\sigma$ is the non-trivial $k$-automorphism of $k(\sqrt{a})$, we have $\sigma(\phi) = \phi \rho$, 
  where $\rho$ is the reflection at the hyperplane $R$, and thus $\phi \notin \aPO_f(k)$.
  Hence the gluing isometry $\varphi$ has field of definition $k(\sqrt{a})$, and by Theorem~\ref{th:tf-gluing-field-of-def-gluing-isom}, this is also the trace field of $M$.
\end{proof}

In even dimensions, one can use the lemma to easily construct twists of arithmetic manifolds which increase the trace field by a quadratic extension.
We will use the notation $\< a_1, \dots, a_m\>$ for the quadratic form $a_1 x_1^2 + \cdots + a_m x_m^2$ and $\perp$ for orthogonal sum.

The first example shows how to realize $\Q(\sqrt{d})$, $d$ odd, as the twist of an arithmetic manifold with trace field $\Q$, 
in any even dimension.
\begin{example}\label{ex:1}
  Let $d >0$ be odd and square free. Write $d = 2 b +1$ and consider the following quadratic forms and matrices:
  \[
  q_1 = \<-1, 1\>, \; 
  A_1 = \begin{pmatrix}
    b+1 & b \\
    -b & -(b+1)
  \end{pmatrix} \text{ and }
  q_2 = \< d, 1 \>,\;  A_2 = \begin{pmatrix}
    0 & 1 \\
    d & 0
  \end{pmatrix}.
\]  
It is easily computed that $q_i \circ A_i = d \,q_i$ and $A_i^2 = d I$.
Therefore, one can apply Lemma~\ref{lem:closing-up-increase-trace-field} to the quadratic form 
$f_0 = q_1 \perp q_2 \perp \cdots \perp q_2$, and the matrix $A_0$ obtained as a block-diagonal matrix with diagonal 
entries $A_1, A_2, \dots, A_2$. 
The resulting twist $M$ then has trace field $\Q(\sqrt{d})$.
\end{example}

Table~\ref{tab:1} shows how to adapt the previous example to produce $\Q(\sqrt{d})$ for even $d \leq 42$ (in even dimensions $\geq 4$).
This suggests that it is in fact possible to realize every quadratic extension of $\Q$ as twists of arithmetic manifolds of trace 
field $\Q$.
In the second example, we show how to realize $k(\sqrt{b})$, $b \in k \setminus \Q$ totally positive, as the twist of an arithmetic manifold
with trace field $k$.
\begin{example}\label{ex:2}
  Let $k$ be a totally real number field different from $\Q$, and let $b \in k\setminus \Q$ be totally positive.
  Up to scaling $b$ with a square, we can assume that $b-1$ has exactly one negative conjugate.
  Consider the following quadratic forms and matrices:
  \[
  q_1 = \< b-1 ,1 \>, \; A_1 = \begin{pmatrix}
    1 & 1 \\
    b-1  & -1
  \end{pmatrix} \text{ and } 
  q_2 = \<b , 1 \>, \; A_2 = \begin{pmatrix}
    0 & 1 \\
    b & 0
  \end{pmatrix}.
\]
  As before, we have $q_i \circ A_i = b q_i$ and $A_i^2 = b I$, and we can hence proceed as in the previous example.
  Observe that the conditions on $b$ ensure that $f_0$ is admissible.
  The resulting twist $M$ has trace field $k(\sqrt{b})$.
\end{example}

These two examples imply Theorem~\ref{th:intro-twist-nonarithmetic} of the introduction.
The end of this section is devoted to the proof that the orbifold $\Delta_5 \bs \Hy^5$ is not commensurable to any gluing of arithmetic
pieces (Theorem~\ref{th:intro-Delta-5}).
The main tool is the following:

\begin{theorem}\label{th:arith-gluing-odd-dim}
  Let $M \in \Agl$ be an \emph{odd dimensional} gluing of arithmetic pieces.
  Assume all the pieces used in its construction come from commensurable arithmetic manifolds, of trace field $k$.
  Then the trace field of $M$ equals $k$ (it is \emph{quasi-arithmetic} in the sense of \cite{Vinberg-non-arith-1}).
\end{theorem}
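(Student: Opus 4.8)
The plan is to argue by induction on the number $r$ of gluing operations (interbreedings and closing ups) used to build $M$. Since the arithmetic manifolds underlying the pieces are pairwise commensurable, they all have trace field $k$ and a common ambient group: there is a quadratic form $f$ over $k$ such that each of them has ambient group $k$-isomorphic to $\aPO_f$ (commensurability of arithmetic manifolds of simplest type is equivalent to agreement of trace fields together with $k$-isomorphism of ambient groups, cf.\ the end of the proof of Proposition~\ref{prop:gluing-containing-noncommensurable-nonarith}). I will prove, more precisely, that \emph{every} sub-gluing $M'$ appearing in the construction of $M$ has trace field $k$ and ambient group $k$-isomorphic to $\aPO_f$; tracking the ambient group, and not merely the trace field, is essential, since distinct commensurability classes can share a trace field. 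For $r=0$, $M'$ is a single arithmetic piece, and by \cite[Lem.~2.5]{Emery-Mila} its trace field and ambient group coincide with those of the arithmetic manifold it is a piece of, which gives the base case.

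For the inductive step, write $M'$ as obtained from sub-gluings with strictly fewer operations. If $M'$ is a \emph{closing up} of $M_1'$, then by the inductive hypothesis $M_1'$ has trace field $k$, and Proposition~\ref{prop:odd-dimensional-closing-up} — applicable because $\dim M_1' = \dim M$ is odd — shows that $M'$ again has trace field $k$. Moreover $M_1'$ is a piece of $M'$ (cut $M'$ along the newly glued hypersurface), so Lemma~\ref{lem:trace-field-inclusion} gives $\aPO_{f_{M'}} \cong \aPO_{f_{M_1'},k} \cong \aPO_f$ over $k$. This settles the closing-up case, using the parity of the dimension only through Proposition~\ref{prop:odd-dimensional-closing-up}.

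The interbreeding case is the heart of the matter. Suppose $M' = M_1' \cup_\varphi M_2'$, glued along boundary hypersurfaces $N_1, N_2$. By the inductive hypothesis the ambient groups of $M_1'$ and $M_2'$ are both $k$-isomorphic to $\aPO_f$, hence to each other, so their defining quadratic forms are similar over $k$. Following verbatim the reduction carried out in the proof of Theorem~\ref{th:gluing-quadratic}, I pass to models in which $f_1 = q_1 + a_1 x_n^2$ and $f_2 = q_1 + a_2 x_n^2$ share the same form $q_1$, with $\{x_n = 0\}$ a hyperplane lift of $N_i$ in $\Hy_{f_i}$; an extension of $\varphi$ is then induced by the block-diagonal matrix with blocks $I$ and $\sqrt{a_1/a_2}$, so by Theorem~\ref{th:tf-gluing-field-of-def-gluing-isom} the trace field of $M'$ is contained in $k(\sqrt{a_1 a_2})$. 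Now $f_1$ and $f_2$ have $n+1$ variables, which is \emph{even} since $n$ is odd, and they are similar over $k$; comparing discriminants (the similarity factor contributes a square, as $n+1$ is even) gives $\mathrm{disc}(q_1)\,a_1 \equiv \mathrm{disc}(q_1)\,a_2$ in $k^\times/(k^\times)^2$, whence $a_1 a_2 \in (k^\times)^2$ and $k(\sqrt{a_1 a_2}) = k$. Thus the trace field of $M'$ is contained in $k$; the reverse inclusion holds by Lemma~\ref{lem:trace-field-inclusion} since $M_1'$ is a piece of $M'$, so the trace field of $M'$ equals $k$, and then Lemma~\ref{lem:trace-field-inclusion} once more gives $\aPO_{f_{M'}} \cong \aPO_{f_1,k} \cong \aPO_f$ over $k$. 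This completes the induction; taking $M' = M$ yields the theorem.

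I expect the main obstacle to be purely organizational: setting up the induction so that the $k$-isomorphism class of the ambient group (equivalently, the similarity class of the defining form over $k$) is carried through every step, since otherwise the interbreeding step has no traction. Once that is in place, the decisive point is recognizing that the forms $f_1, f_2$ are similar over $k$ in an \emph{even} number of variables, so that the elementary discriminant comparison kills the potential quadratic extension $k(\sqrt{a_1 a_2})$ coming from the gluing isometry. The parity of the dimension thus enters twice: through Proposition~\ref{prop:odd-dimensional-closing-up} for closing ups, and to make $n+1$ even for interbreedings.
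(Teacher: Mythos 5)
Your proof is correct and follows essentially the same route as the paper: Proposition~\ref{prop:odd-dimensional-closing-up} for closing ups, the reduction to $f_i = q + a_i x_n^2$ from the proof of Theorem~\ref{th:gluing-quadratic} for interbreedings, and the discriminant comparison in an even number of variables $n+1$ to conclude $a_1a_2 \in (k^\times)^2$. Your explicit induction carrying the $k$-isomorphism class of the ambient group through every sub-gluing is a point the paper leaves implicit (it simply asserts that $f_1$ and $f_2$ are similar because $M_2$ is commensurable to an arithmetic piece in $M_1$), and making it explicit via Lemma~\ref{lem:trace-field-inclusion} is a genuine, if minor, improvement in rigor rather than a different approach.
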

\begin{proof}
  It is enough to show that using such pieces, the trace field does not change under interbreedings and closing ups.
  Since the dimension is odd, closing up does not change the trace field (by Proposition~\ref{prop:odd-dimensional-closing-up}). 
  Assume that $M$ is constructed from interbreeding $M_1$ with the arithmetic piece $M_2$.
  For $i = 1,2$, let $\tM_i \subset \Hy_{f_i}$ be a model for $M_i$.
  From the proof of Theorem~\ref{th:gluing-quadratic}, we can assume that $f_i = q + a_i x_n^2$ for a fixed quadratic form $q$.
  Now since $M_2$ is commensurable to an arithmetic piece in $M_1$, the forms $f_1$ and $f_2$ must be similar.
  Since $n+1$ is even, $f_1$ and $f_2$ must have the same discriminants, and this implies that $a_1/a_2$ is a square in $k$.
  Therefore the extension used in the proof of Theorem~\ref{th:gluing-quadratic} is defined over $k$ and the trace field of $M$ is $k$, by Theorem~\ref{th:tf-gluing-field-of-def-gluing-isom}.
\end{proof}

We are ready for the:
\begin{proof}[Proof of Theorem~\ref{th:intro-Delta-5}]
  Using their Angle Rigidity Theorem \cite[Th.~4.1]{FLMS}, Fisher et.~al.\ prove that the orbifold $\Delta_5 \bs \Hy^5$ is not 
  commensurable 
  to any gluing of pieces of non-commensurable arithmetic manifolds (see \cite[§6.2]{FLMS}).
  Assume it is commensurable to the gluing $M$ of pieces of commensurable arithmetic manifolds $M_1, \dots, M_r$.
  As shown for example in \cite{Emery-Mila}, the orbifold $\Delta_5 \bs \Hy^5$ (and thus $M$) has trace field $\Q(\sqrt{2})$ and 
  ambient group $\aPO_f$ with $f = -x_0^2 + x_1^2 + \cdots + x_5^2$.
  By Lemma~\ref{lem:trace-field-inclusion}, the trace field of the pieces (and thus of the $M_i$ by Lemma~2.5 of 
  \cite{Emery-Mila}) is $\Q$ or $\Q(\sqrt{2})$.
  Since $\aPO_{f,\Q(\sqrt{2})}$ is not admissible, it must be $\Q$.
  By Theorem~\ref{th:arith-gluing-odd-dim}, $M$ (and thus $\Delta_5 \bs \Hy^5$) must have trace field $\Q$ as well, a contradiction.
\end{proof}


\begin{table}[t]\label{tab:1}
  \centering
  \caption{Twists realizing $\Q(\sqrt{d})$ for even $d \leq 42$.}
\newcolumntype{L}{>{$}l<{$}} 
\renewcommand{\arraystretch}{1.2} 
\begin{tabular}{LLLL}
  \toprule
  $Trace Field$ & $Quadratic forms$ & $Matrices$ \\
  \midrule
K=\Q(\sqrt{2}) & 
\begin{array}{l}
q_1 = \<-1, 1, 1, 1\> \\ q_2 = \<1, 1\> \end{array} & 
A_1 = \begin{psmallmatrix}
2 & 0 & 1 & 1 \\
0 & 0 & 1 & -1 \\
1 & 1 & 1 & 1 \\
1 & -1 & 1 & 1
\end{psmallmatrix} & A_2 = \begin{psmallmatrix}
1 & 1 \\
1 & -1
\end{psmallmatrix} \\

K=\Q(\sqrt{6}) & 
\begin{array}{l}
q_1 = \<-1, 1, 1, 2\> \\ q_2 = \<1, 2\> \end{array} & 
A_1 = \begin{psmallmatrix}
3 & 0 & 1 & 2 \\
0 & 0 & -2 & 2 \\
1 & 2 & 1 & 2 \\
1 & -1 & 1 & 2
\end{psmallmatrix} & A_2 = \begin{psmallmatrix}
2 & 2 \\
1 & -2
\end{psmallmatrix} \\

K=\Q(\sqrt{10}) & 
\begin{array}{l}
q_1 = \<-1, 1, 1, 1\> \\ q_2 = \<1, 1\> \end{array} & 
A_1 = \begin{psmallmatrix}
4 & 1 & 1 & 2 \\
1 & 3 & -1 & 1 \\
-1 & 1 & 1 & -3 \\
2 & 1 & 3 & 2
\end{psmallmatrix} & A_2 = \begin{psmallmatrix}
1 & 3 \\
3 & -1
\end{psmallmatrix} \\

K=\Q(\sqrt{14}) & 
\begin{array}{l}
q_1 = \<-1, 1, 1, 5\> \\ q_2 = \<1, 5\> \end{array} & 
A_1 = \begin{psmallmatrix}
4 & 1 & 1 & 0 \\
-1 & -1 & -3 & 5 \\
-1 & -3 & -1 & -5 \\
0 & 1 & -1 & -2
\end{psmallmatrix} & A_2 = \begin{psmallmatrix}
3 & 5 \\
1 & -3
\end{psmallmatrix} \\

K=\Q(\sqrt{22}) & 
\begin{array}{l}
q_1 = \<-1, 1, 1, 2\> \\ q_2 = \<1, 2\> \end{array} & 
A_1 = \begin{psmallmatrix}
5 & 0 & 1 & 2 \\
0 & -4 & 2 & -2 \\
-1 & 2 & 1 & -6 \\
-1 & -1 & -3 & -2
\end{psmallmatrix} & A_2 = \begin{psmallmatrix}
2 & 6 \\
3 & -2
\end{psmallmatrix} \\

K=\Q(\sqrt{26}) & 
\begin{array}{l}
q_1 = \<-1, 1, 1, 1\> \\ q_2 = \<1, 1\> \end{array} & 
A_1 = \begin{psmallmatrix}
6 & 0 & 1 & 3 \\
0 & -4 & 3 & -1 \\
-1 & 3 & 3 & -3 \\
-3 & -1 & -3 & -5
\end{psmallmatrix} & A_2 = \begin{psmallmatrix}
1 & 5 \\
5 & -1
\end{psmallmatrix} \\

K=\Q(\sqrt{30}) & 
\begin{array}{l}
q_1 = \<-1, 1, 1, 5\> \\ q_2 = \<1, 5\> \end{array} & 
A_1 = \begin{psmallmatrix}
6 & 0 & 1 & 5 \\
0 & 0 & 5 & -5 \\
-1 & 5 & -1 & -5 \\
-1 & -1 & -1 & -5
\end{psmallmatrix} & A_2 = \begin{psmallmatrix}
5 & 5 \\
1 & -5
\end{psmallmatrix} \\

K=\Q(\sqrt{34}) & 
\begin{array}{l}
q_1 = \<-1, 1, 1, 1\> \\ q_2 = \<1, 1\> \end{array} & 
A_1 = \begin{psmallmatrix}
6 & 0 & 1 & 1 \\
0 & 4 & 3 & -3 \\
-1 & 3 & -5 & -1 \\
-1 & -3 & -1 & -5
\end{psmallmatrix} & A_2 = \begin{psmallmatrix}
3 & 5 \\
5 & -3
\end{psmallmatrix} \\

K=\Q(\sqrt{38}) & 
\begin{array}{l}
q_1 = \<-1, 1, 1, 2\> \\ q_2 = \<1, 2\> \end{array} & 
A_1 = \begin{psmallmatrix}
7 & 0 & 3 & 2 \\
0 & -4 & 2 & -6 \\
-3 & 2 & -5 & -6 \\
-1 & -3 & -3 & 2
\end{psmallmatrix} & A_2 = \begin{psmallmatrix}
6 & 2 \\
1 & -6
\end{psmallmatrix} \\

K=\Q(\sqrt{42}) & 
\begin{array}{l}
q_1 = \<-1, 1, 1, 6\> \\ q_2 = \<1, 6\> \end{array} & 
A_1 = \begin{psmallmatrix}
7 & 0 & 1 & 6 \\
0 & 0 & 6 & -6 \\
-1 & 6 & -1 & -6 \\
-1 & -1 & -1 & -6
\end{psmallmatrix} & A_2 = \begin{psmallmatrix}
6 & 6 \\
1 & -6
\end{psmallmatrix} \\
\bottomrule
\end{tabular}
\end{table}

{\small
\bibliographystyle{abbrv}
\bibliography{bibliography} 
}

\end{document}